\documentclass{elsarticle}
\usepackage{amsmath, amsthm, amssymb}
\usepackage{graphicx}
\usepackage{cancel}
\usepackage{subcaption}
\usepackage{tikz}
\newcommand{\bs}{\backslash}
\newcommand{\mb}[1]{\mathbf{#1}}

\usetikzlibrary{shapes}
\newtheorem{definition}{Definition}
\newtheorem{example}{Example}
\newtheorem{proposition}{Proposition}
\newtheorem{condition}{Condition}
\newtheorem{property}{Property}

\begin{document}
	
\begin{frontmatter}
	\title{Formal Definitions of Conservative Probability Distribution Functions (PDFs)}
	
	\author[shane]{Shane Lubold}\ead{sl223@uw.edu}
	\author[clark]{Clark N. Taylor}\ead{clark.taylor@afit.edu}
	
	\address[shane]{Department of Statistics, University of Washington, Seattle, WA 98105}
	\address[clark]{Electrical and Computer Engineering, Air Force Institute of Technology, WPAFB, OH 45433}
		
	\begin{keyword}
		Distributed Data Fusion, Sensor Fusion, Distributed Estimation, Covariance Intersection
	\end{keyword}
		
\begin{abstract}
	Under ideal conditions, the probability density function (PDF) of a random variable, such as a sensor measurement, would be well known and amenable to computation and communication tasks. However, this is often not the case, so the user looks for some other PDF that approximates the true but intractable PDF. Conservativeness is a commonly sought property of this approximating PDF, especially in distributed or unstructured data systems where the data being fused may contain un-known correlations.  Roughly, a conservative approximation is one that overestimates the uncertainty of a system.  While prior work has introduced some definitions of conservativeness, these definitions either apply only to normal distributions or violate some of the intuitive appeal of (Gaussian) conservative definitions.  This work provides a general and intuitive definition of conservativeness that is applicable to any probability distribution that is a measure over $\mathbb{R}^m$ or an infinite subset thereof, including multi-modal and uniform distributions.  Unfortunately, we show that this \emph{strong} definition of conservative does not hold with any of the commonly used data fusion techniques.  Therefore, we also describe a weaker definition of conservative and show it is preserved through common data fusion methods, assuming the input distributions can be factored into independent and common PDFs that can be normalized over $\mathbb{R}^m$. By illustrating what is possible and not possible in terms of conservativeness during data fusion, an improved understanding of data fusion methods for general PDFs can be obtained.
\end{abstract}

\end{frontmatter}

\section{Introduction}
\label{s:intro}
	Ideally, the true probability density function (PDF) modeling a random event would be known, easily computed, and easily shared among cooperating agents. However, in many scenarios, such a PDF is not available, so alternative PDFs that have certain properties with respect to the original PDF are required instead. Consider the case of combining sensed information from multiple sources \cite{mutambara1998decentralized,da2021recent}. If we knew the correlation between the sources, then Bayesian data fusion would be applicable. However, if the correlation is unknown, it is generally accepted that we should overestimate the uncertainty on the output \cite{julier1997non}. In many cases, the joint model of information is simply unavailable or too costly or complex to determine\footnote{For example, when fusing information in a distributed system with numerous sensors, the correlation between all sensors must be maintained.  This quickly becomes impractical as the number of sensors grows.}. Thus, one often must take each ``piece'' of information ``marginally'' (or separately).  At the same time, one should be mindful of not taking each piece of information as independent of all others. One should therefore seek a \textit{conservative estimate}. But what does it mean to have a conservative estimate? To intuitively describe the idea of conservativeness, consider the following examples.
	
\begin{example}
\normalfont
\label{ex:Robot}
Consider a robot maneuvering through an area with an obstacle.  An on-board sensor is used to estimate the location of the obstacle, and a path planner is used to direct the robot away from the obstacle's location.  The path planner is designed to avoid a collision with a greater than 99.9\% accuracy, assuming an accurate PDF of the object's location is passed in.  A sensor on-board the robot detects the object and passes information (a PDF) about the estimated location to the path planner.  To ensure the obstacle is avoided, it is better for the generated PDF to be \emph{conservative} (i.e. the areas enclosed by the probabilistic bound, 99.9\% in our example, are larger than what the total information given by the sensor implies) rather than optimistic (the estimated 99.9\% probability area will be smaller than what the sensor measurements actually represent).
\end{example}

\begin{example}
    \normalfont
    \label{ex:distributed}
    Consider the multisensor, multitarget tracking problem and its solutions as described in \cite{li2019second,da2021recent}.  A distributed network of sensors is responsible for tracking multiple targets of interest.  This problem is typically addressed in one of two ways: using (1) data-level measurement fusion (where raw measurements are communicated between the sensors) or (2) estimate-level density fusion, where the best estimate of target locations' PDFs are communicated between the sensors.  This paper is focused more particularly on the second approach, estimate-level density fusion and we analyze some often-used data fusion techniques such as the linear (or arithmetic averaging) fusion and log-linear (geometric averaging) fusion techniques.  With estimate-level fusion, the goal is to generate a PDF that is ``conservative'' w.r.t. the data-level fusion technique.  While data-level fusion may be more accurate, it requires significantly more computation and communication.
\end{example}

While these examples help to intuitively define what a conservative PDF is, we desire a formal definition of conservativeness between two PDFs and in data fusion.  Specifically, we want to formally answer the following two questions representing different aspects of conservativeness. In what follows, let $p_1, \ p_2, \ p_f, \ p_t$ be PDFs.

\textbf{Question \#1:} Is $p_1$ conservative with respect to (w.r.t.) $p_2$?

\textbf{Question \#2:} Given a fusion rule $\mathcal{F}$ with inputs $p_1$ and $p_2$, is  $p_f=\mathcal{F}(p_1, p_2)$ conservative w.r.t. $p_t=\mathcal{B}(p_1,p_2)$, where $\mathcal{B}$ represents the ``optimal'' Bayesian fusion of PDFs with known common information?  (We define ``optimal'' Bayesian data fusion in \eqref{eq:optimal}.)

To formally answer these questions requires a formal definition of a conservativeness.  While there is a formal definition for Gaussian distributions, there is no commonly accepted definition for general distributions on $\mathbb{R}^m$ or an infinite subset thereof.  In this paper, we first introduce a definition, \emph{strictly conservative}, that expresses the idea of less certainty while requiring some notion of similarity between two PDFs.  This definition is, we believe, an intuitively correct definition for answering Question \#1 for PDFs on $\mathbb{R}^m$ or an infinite subset thereof.  Unfortunately, we show that this definition cannot be used to guarantee conservative data fusion, making it impractical for answering Question \#2.  We therefore introduce a weaker definition (\emph{weakly conservative}) that also expresses the idea of less certainty, but with a weaker notion of similarity.  We show that using this definition of conservative, we are able to answer Question \#2 in the affirmative for several previously introduced data fusion techniques.

\subsection{Prior definitions of conservative}
\label{ss:priorDefs}
For fusion of Gaussian PDFs, the positive semi-definite (p.s.d.) definition of conservativeness is commonly accepted \cite{AjglSimandl,Julier,noack2017decentralized,Noack,Niehsen,Ajgl}.  A matrix $\mb{A}$ is p.s.d. compared to $\mb{B}$ (denoted $\mb{A} \succeq \mb{B}$) if\footnote{With strict inequality, we have the positive definite ($\succ$) definition.}
\begin{equation}
x^T (\mb{A}-\mb{B})x \geq 0 \, \forall x.
\label{eq:psd}
\end{equation}
A Gaussian PDF that is the result of a data fusion algorithm $\mathcal{F}$ is p.s.d. conservative if its covariance (second order central moment) is greater than the true covariance of the underlying random variable ($E[xx^\top]$).  This definition has been used to prove that certain fusion rules are conservative (Question \#2).  For example, consider covariance intersection (log-linear polling), where it is shown in \cite{Niehsen} that if the input distributions are all conservative, then the output is also conservative.

Unfortunately, the p.s.d. definition of conservative does not easily extend to non-Gaussian distributions.  Previous work \cite{Julier,wang2012distributed} proposed extending the p.s.d. definition to non-Gaussian PDFs by comparing the second moment of the realized distribution with the true second moment after fusion. However, comparing specific moments between PDFs is not always informative. For example, consider the variances of two distributions: (1) a Gaussian distribution with large variance and (2) a multi-modal distribution consisting of the weighted sum of two Gaussian distributions with small variances but different means.  While the distribution with two Gaussians may have a larger variance (e.g., when the means of the two distributions are far apart), it is not clear that it should be conservative with respect to the single Gaussian distribution.

Another approach to defining conservativeness for general PDFs \cite{AjglSimandl,ajgl2014conservativeness,bailey2012conservative} requires that $H(p_c) \ge H(p_t)$, where $H$ is the differential entropy of a distribution. We call this the greater entropy (GE) condition.  Unfortunately, as with methods evaluating the second central moment, entropy by itself does not convey sufficient information to characterize an entire PDF.  Therefore, previous papers have combined the GE condition with other metrics to define conservativeness.

In \cite{AjglSimandl,ajgl2014conservativeness}, the increase in entropy from $p_t$ to $p_c$ is required to be greater than the Kullback-Leibler (KL) divergence between the two distributions.  Unfortunately, even this condition does not prevent some unsatisfactory distributions from being considered as ``conservative''.  One of the positive characteristics of p.s.d. conservative for Gaussians is that a marginalized covariance of the conservative distribution will be conservative w.r.t. the true distribution marginalized in the same direction.  The combined GE and KL do not ensure that this positive characteristic occurs.  This shortcoming in illustrated in Figure~\ref{fig:PSDELLIPSE} where Gaussian distributions are illustrated by a level set of that distribution, forming an ellipse.  While all p.s.d. conservative ellipses will entirely enclose the truth ellipse, the GEKL distribution does not.

\begin{figure}
	\minipage{0.38\textwidth}
	\scalebox{0.75}{
	\begin{tikzpicture}
		\draw[thick] (0,0) ellipse (2cm and 1cm);
		\draw[violet, thick, dotted] (0,0) ellipse (2cm and 1.5cm);
		\draw[blue, thick, dashdotted] (0,0) ellipse (3cm and 1.5cm);
		\draw[red, dashed] (0,0) ellipse (1.5cm and 3cm);
		\node[label=left:x1] at (-2cm,0) {\textbullet};
		\node[label=left:x2] at (0,1.25cm) {\textbullet};
	\end{tikzpicture}} 
	
	\centering (a)
	\endminipage
	\hspace{1mm}
	\minipage{0.61\textwidth}
	\scalebox{0.8}{
	\begin{tabular}{|p{.33\textwidth}|c|c|c|c|}
		\hline
		\centering Label & Covariance & p.s.d. & GEOP & GEKL\\\hline\hline
		Truth\par(black, solid) & $\begin{pmatrix}4 & 0\\0 & 1\end{pmatrix}$ & N/A & N/A & N/A \\\hline
		Candidate 1\par(violet, dotted) & $\begin{pmatrix}4 & 0 \\0 & 2.25\end{pmatrix}$ &  $\checkmark$ & X & $\checkmark$\\\hline
		Candidate 2\par(blue, dash-dot) & $\begin{pmatrix}9 & 0\\0 & 2.25\end{pmatrix}$ & $\checkmark$ & $\checkmark$ & $\checkmark$ \\\hline
		Candidate 3\par(red, dashed) & $\begin{pmatrix}2.25 & 0 \\0 & 9\end{pmatrix}$ & X & X & $\checkmark$\\\hline
	\end{tabular}} 
	
	\vspace{4mm}
	\centering (b)
	\endminipage
	\caption{In subfigure (a), level sets for the original (truth) PDF and three candidate conservative PDFs are shown.  Candidates 1 and 2 are conservative in the traditional, p.s.d. conservative sense, but only candidate 2 meets the order preserving (OP) condition.  Candidate 3 is not p.s.d. conservative, but still meets the GE and KL conditions from \cite{AjglSimandl,ajgl2014conservativeness}.  Subfigure (b) summarizes the graph in (a).}
	\label{fig:PSDELLIPSE}
\end{figure}
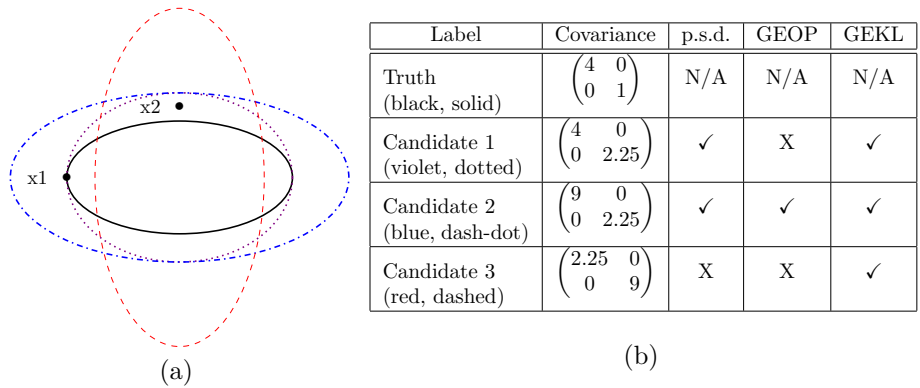

In \cite{bailey2012conservative}, an order preservation (OP) condition is added to the greater entropy condition. This condition states: for all $x_1, \ x_2$, $p_t(x_1) \ge p_t(x_2) \text{ iff } p_c (x_1) \ge p_c(x_2)$. Because of these two conditions, we refer to this paper's definition of conservative as \emph{GEOP conservative}. Unfortunately, the GEOP definition is overly strict, excluding many Gaussian distributions that could reasonably be considered conservative.  Consider the two points, $x_1$ and $x_2$ shown in Figure \ref{fig:PSDELLIPSE}.  Note that $x_1$ is on the level set ellipse for the \emph{truth} PDF, while $x_2$ is outside it.  This implies that $p_t(x_1) > p_t (x_2)$.  When the ellipse for \emph{Candidate 1} is considered,  $x_2$ is still approximately on the ellipse, while $x_1$ is significantly inside of it, showing that $p_c(x_1) < p_c(x_2)$, violating the order preservation condition.  For Gaussian distributions, the GEOP definition is significantly more restrictive than the p.s.d. definition (for more details see Section~\ref{s:ComparePrevious}).

The discussion of non-Gaussian distributions so far has primarily focused on Question \#1, but prior work has also attempted to define conservative data fusion for non-Gaussian PDFs (Question \#2). In \cite{julier2006using}, a definition of conservative data fusion was proposed stating ``an update rule is consistent if the probability of finding that the state is at $x$ is not reduced as a result of the update.''  While there is some intuitive reasoning for this definition, \cite{julier2006using} admits that this definition does not necessarily lead to useful fusion algorithms.  In \cite{bailey2012conservative} it states: ``A fusion rule is conservative if and only if it satisfies two properties: (1) It does not double count common information and (2) it replaces each component of independent information with a conservative approximation.''  While this definition of conservative data fusion was introduced in the same paper as the GEOP conservative definition, data fusion techniques that follow these two proposed rules do not produce PDFs that are GEOP conservative (see Section~\ref{ss:strictCons}).  This leads to the undesirable situation where a conservative fusion rule (Question \#2) produces PDFs that are not conservative w.r.t. the optimally fused distribution (as defined by the answer to Question \#1). 


\subsection{Contributions}
Given these shortcomings in related work, a new definition of conservative PDFs is desired.  This definition should be applicable to non-Gaussian PDFs (Question \#1), should enable the analysis of fusion rule outputs to answer question \#2, and for Gaussians should be roughly equivalent to the p.s.d. definition of conservative.  We would also like this definition to have some intuitive appeal.  

In this paper, we first propose a definition (strictly conservative) for answering Question \#1 for non-Gaussian PDFs. This definition is based off of the concept of minimum volume sets and is closely related to the \emph{matrix version} of p.s.d. for Gaussian distributions with the same mean.  We believe this definition has considerable intuitive appeal when answering Question \#1.  Unfortunately, we also show that in general this definition cannot hold through any fusion rule that does not know the common information between its inputs (Question \#2).  We therefore propose a weaker definition of conservative (also based off minimum volume sets) that is preserved through various data fusion methods assuming the input PDFs can be factored into common and independent PDFs \emph{and} all PDFs are normalizable (they integrate to a finite value across $\mathbb{R}^m$).  Given these assumptions, we can prove properties for data fusion techniques including the linear opinion pool \cite{clemen1999combining,Abbas2009}, Chernoff or log linear opinion pool \cite{Julier}, and homogeneous fusion \cite{Taylor2019}.  

The rest of the paper is as follows. We introduce our new definitions of conservativeness in section \ref{sec: Def_Cons}, and compare them against prior definitions in Section~\ref{s:ComparePrevious}.  In Section \ref{s:Applications} we show how our new definition of conservativeness can be used to verify the performance of ``conservative'' fusion rules, even for non-Gaussian distributions, assuming specific properties of the distributions being fused. In Section \ref{sec: Conclusion}, we provide concluding remarks.

\subsection{Notation}
For clarity, we briefly review our notation. We use $\mathrm{N}(\mu, \Sigma)$ to denote the PDF of a Gaussian random variable with mean $\mu$ and covariance $\Sigma$ and $\mathrm{U}(a, b)$ to denote the PDF of a continuous, uniform random variable on $(a, b)$. For a PDF $p$, we use $\text{supp}(p)$ to refer to the support of $p$.
For $\mathbf x = (x_1, \dotsc, x_n) \in \mathbb{R}^m$, we let $||\mathbf x||^2 = \sum_{i = 1}^n x_i^2$ be its squared $\ell^2$-norm. 
We denote the complement of a set $A$ as $A^\mathsf{c}$.  For probability distributions, we use lower case $p(x)$ to represent the PDF at $x$, while $P(A)$ returns the probability mass of $A$.

For fusion, we consider multiple (n) ``input'' distributions, denoted $p_i(x),\,i\in\{1\ldots n\}$ where
\begin{equation}
    p_i(x) := p(x|\mathbf{Z}_i) = \frac{p(\mathbf{Z}_i|x) p(x)}{\int p(\mathbf{Z}_i|x) p(x) dx}
	\label{eq:trueFullFusion}
\end{equation}
where $\mathbf{Z}_i$ is shorthand for the set of available `data' (or `measurements') defining the posterior $p_i(x):= p(x|\mathbf{Z}_i)$.  The definition of $p_i(x)$ is the optimal `Bayesian' posterior considering $\mathbf{Z}_i$ in isolation under the standing assumption that the data $\mathbf{Z}_i$ defining $p(\mathbf{Z}_i|\mathbf{x})$ is conditionally independent in and of itself.

If $p(\mathbf{Z}_i|\mathbf{x})$ and $p(\mathbf{Z}_j|\mathbf{x}),\ i\neq j$ are conditionally independent, then the optimal `Bayesian' data fusion result can be expressed as
\begin{equation}
	p_{opt}(\mathbf{x}|\mathbf{Z}_1 \ldots \mathbf{Z}_n) =  \frac{p(x) \prod_{i=1}^n p(\mathbf{Z}_i| x)}{\int p(x) \prod_{i=1}^n p(\mathbf{Z}_i| x) dx}
\end{equation}
The fusion problems we consider in this paper, however, assume there is some ``common'' information $\mathbf{Z}_C$ present in more than one set $\mathbf{Z}_i$.  We use the notation $\mathbf{Z}_{i\bs C}$ to denote the set of measurements in $\mathbf{Z}_i$ that is not in $\mathbf{Z}_C$.  $\mathbf{Z}_C$ is defined such that 
\begin{equation*}
    \mathbf{Z}_{i\bs C} \cap \mathbf{Z}_{j\bs C} = \varnothing, i\neq j.
\end{equation*}
For notational simplicity, we use the notation:
\begin{align}
    p_c(x) &:= \frac{p(\mathbf{Z}_C|x)p(x)}{\int p(\mathbf{Z}_C|x)p(x) dx}\\
	p_{i\bs C}(x) &:= \frac{p(\mb{Z}_{i\bs C} |x)}{\int p(\mb{Z}_{i\bs C}|x)\ dx}
\end{align}
Note that if $\mathbf{Z}_C = \varnothing$, then $p_C(x) = p(x)$, the prior in equation \ref{eq:trueFullFusion}.  Also note that this definition assumes all $p_{i\bs C}(x)$ distributions and the $p_C(x)$ distribution are normalizable, i.e. $\int p_{i\bs C}(x)\ dx$ and $\int p(\mathbf{Z}_C|x)p(x) dx$ are finite.

With this notation, we note that the ``optimal'' or ``true'' fusion result can be expressed as:
\begin{equation}
    p_t(x) \propto p_C(x) \prod_{i=1}^n p_{i\bs C}(x).
     \label{eq:optimal}
\end{equation}
We use this definition for ``optimal'' fusion throughout this paper.

\section{Definition of Conservativeness}
\label{sec: Def_Cons} 
This section has three subsections.  In the first subsection, we introduce the concept of the minimum volume sets of a PDF and discuss some of their properties. Using these results, in subsections \ref{ss:strictCons} and \ref{ss:WeaklyCons} we give two definitions of conservativeness:
strictly and weakly conservative.

\subsection{Minimum Volume Sets}
In this work, we only consider continuous random variables taking values in $\mathbb{R}^m$ with Borel $\sigma$-algebra $\mathcal{B}(\mathbb{R}^m)$.
Let $\lambda$ denote the Lebesgue measure on $\mathbb{R}^m$. 
We assume that all variables in this work have a $\lambda$-density. Recall that a probability measure $P$ has $\lambda$-density if there is a non-negative function $p$ such that for all $A \in \mathcal{B}(\mathbb{R}^m), P(A) = \int_A p(x)\ dx$.

\begin{definition}
Let $P$ be a probability measure with $\lambda$-density $p$. A minimum volume (MV) set for $P$ with area $\alpha \in (0, 1)$ is
\begin{equation}
M_p(\alpha) = \arg \underset{X \subseteq \mathbb{R}^m}\inf \left\{\lambda(X): P(X) \geq \alpha \right\}.
\end{equation}
\end{definition}

Informally, a MV set is a set with the smallest volume that has probability at least $\alpha$. We now give three examples of MV sets.

\begin{example}
If $p = \mathrm{N}(0,1)$, the MV set for $p$ with area $\alpha$ is 
\begin{equation*}
    M_p(\alpha) = \left\{x : |x| \leq \Phi^{-1}\left( \frac{\alpha + 1}{2}\right)\right\} \;,
\end{equation*}
where $\Phi$ is the CDF of the standard Gaussian. 
\end{example}

\begin{example}
    Consider $p = \text{Exp}(\lambda)$, with PDF $p(x) = \lambda \exp(-\lambda x).$ The (unique) MVS for $p$ with area $\alpha$ is 
    \begin{equation*}
        M_p(\alpha) = \left[0, -\frac{\log(1 -\alpha)}{\lambda}\right] \;.
    \end{equation*}
Note that $\log(1-\alpha) < 0$ for $\alpha \in (0, 1)$ so the MV set is a subset of $[0, \infty).$
\end{example}

\begin{example}
Let $p = \frac{1}{3}\mathrm{N}((2, 4), \Sigma_1) + \frac{2}{3}\mathrm{N}((-1, -3), \Sigma_2)$ where 
\begin{equation*}
\Sigma_1 = 
\begin{pmatrix}
6 & 2 \\
2 & 3
\end{pmatrix} \;, \ \ 
\Sigma_2 = 
\begin{pmatrix}
5 & -1 \\
-1 & 4
\end{pmatrix} \;.
\end{equation*}
In Figure \ref{fig: MVS_Mixture}, we plot the boundaries of MV sets for four values of $\alpha.$ The smallest value of $\alpha$ corresponds to the green set, and the values of $\alpha$ then increase as we go from the sets outlined in black, red, and blue.
\label{ex: MVS_Mixture}
\end{example}

To understand the intuitive appeal of using MV sets to define conservativeness, we first discuss some properties of these sets.  See \cite{garcia2003level} for proofs of these properties.

\begin{property}
Every MV set is associated with a \emph{super-level set} $S_p(\beta):=\{x : p(x) \ge \beta \}$. Under some regularity conditions on $p$, the boundary of each $S_p(\beta)$ is a level set of the form $L_p(\beta) := \{x: p(x) = \beta\}$.  
\label{prop: LevelSet_MV}
\end{property}

MV sets are not always unique. If $p$ has a level set of non-zero Lebesgue measure (i.e. $p$ has a ``flat'' region), there exists an $\alpha$ for which there are multiple minimum volume sets. For example, if $p = \mathrm{U}(0, 1)$, the sets $(t, t + \alpha)$ for $t \in [0, 1 - \alpha]$ are all MV sets with area $\alpha$. We call a PDF \emph{non-flat} if all its level-sets have zero measure.

\begin{property}\label{prop:alphaSubSets}
If $\alpha_1, \alpha_2 \in [0,1]$ with $\alpha_1 > \alpha_2$, there exists $M_p(\alpha_1)$ such that $M_p(\alpha_1) \supset M_p(\alpha_2)$. If $p$ is non-flat, the set $M_p(\alpha_1)$ is unique.
\label{prop: Increasing_MV}
\end{property} 

\begin{property}\label{prop:greaterInside}
For $x \in M_p(\alpha)$ and $y \in M^\mathsf{c}_p(\alpha), \ p(x) \geq p(y).$ In addition, if $p$ is non-flat, then $p(x) > p(y)$.
\end{property}

From Properties \ref{prop:alphaSubSets} and \ref{prop:greaterInside}, we can associate to a PDF $p$ a collection of MV sets. Clearly, $M_p(1) = \text{supp}(p)$, while $M_p(\alpha)$ ``tightens'' around the higher likelihood areas of $p$ as $\alpha \rightarrow 0$. The boundaries of the MV sets create a ``topographical'' representation of $p$ (see Figure \ref{fig: MVS_Mixture}).

\begin{figure}
\centering\includegraphics[scale = 0.28]{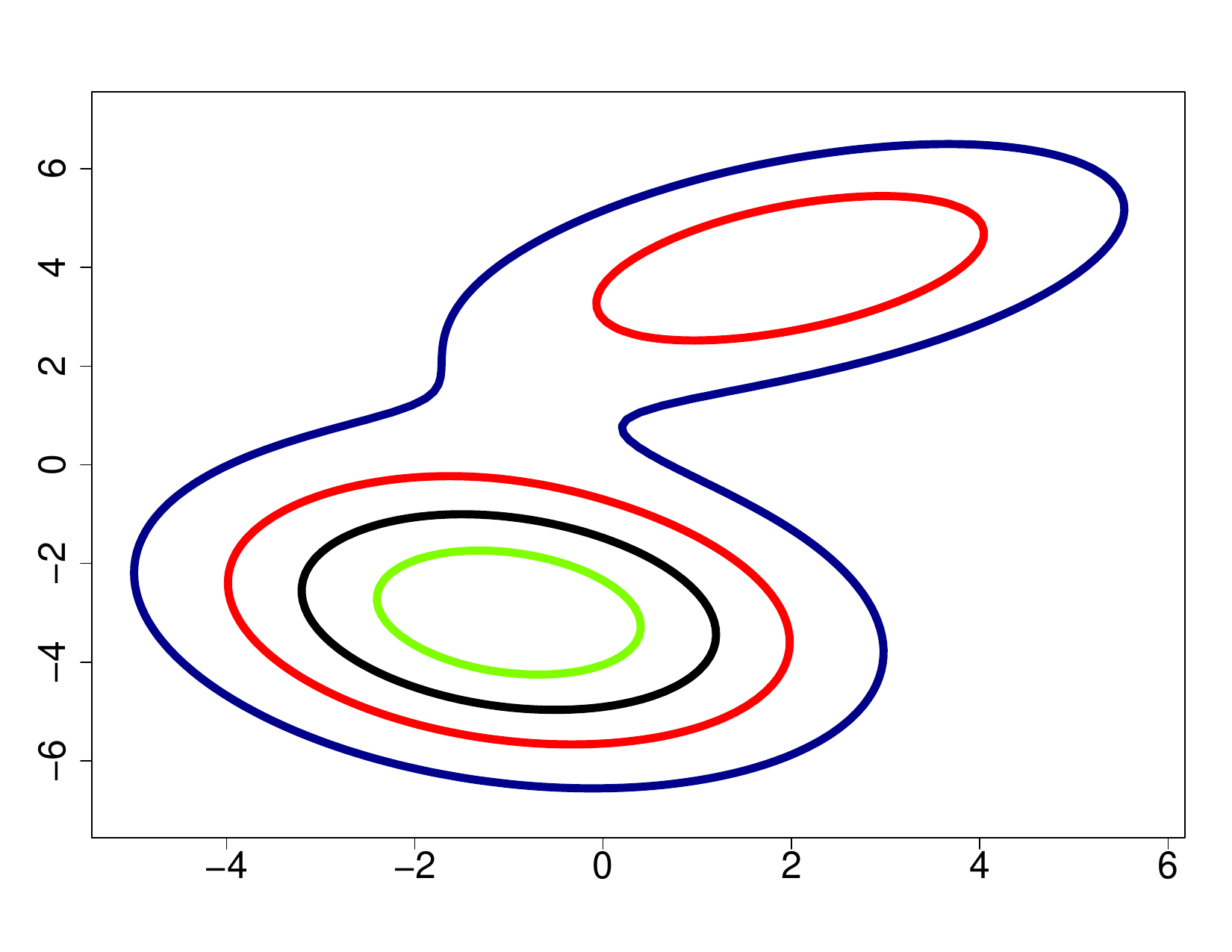}
\caption{Examples of  MV sets for the mixture of Gaussians in Example \ref{ex: MVS_Mixture}.
The values of alpha decrease as we go from the sets outlined in blue, red, black, and green. }
\label{fig: MVS_Mixture}
\end{figure}

\subsection{Strictly Conservative Definition}
\label{ss:strictCons}
We now state our proposed definition for \emph{strictly conservative}.
\begin{definition}
The PDF $p_c$ is \emph{strictly conservative} w.r.t. $p_t$ if for all $\alpha \in [0, 1]$, for each $M_t(\alpha)$, there exists a $M_c(\alpha)$ such that $M_c(\alpha) \supseteq M_t(\alpha).$
\label{def: Original_Cons}
\end{definition}

Informally, consider when $p_t$ and $p_c$ are 2-dimensional, non-flat PDFs.  The strictly conservative definition states that if the topographical lines (level sets) for both $p_t$ and $p_c$ were drawn on the same map, the topographical lines from $p_c$ would always enclose the corresponding lines from $p_t$.  Note that previous papers discussing conservativeness for Gaussian PDFs have often used ellipses representing a particular level set to illustrate the concept of conservativeness.  In many ways, the strictly conservative definition is a formalization and extension of the pedagogical diagrams frequently included in previous papers.

The strictly conservative definition has the following appealing attributes. First, this definition can be applied to any PDF, not just Gaussians. Second, this definition captures the intuition in Example~\ref{ex:Robot}, since for all $\alpha$ (the probability of a region including an object), the conservative distribution's region
is a super set of the true distribution's region.  Third, for two Gaussian distributions with the same mean, the p.s.d. and strictly conservative definitions are equivalent. Consider the following examples demonstrating some of the appealing attributes just described: 
\begin{example}
	Let $p_c=U(a,b)$ and $p_t=U(c,d)$. Then $p_c$ is strictly conservative w.r.t. $p_t$ if $(a,b) \supseteq (c,d)$.
\end{example}

\begin{example}
	Let $p_c$ be a Student's-t distribution with $\nu > 0$ degrees of freedom and $p_t = \mathcal{N}(0,1)$.  Then $p_c$ is strictly conservative w.r.t. $p_t$ for all $\nu$.
\end{example}

Despite the intuitive appeal of this definition, we cannot apply this definition to data fusion for general PDFs.  To understand this weakness of the strictly conservative definition, we first define \emph{maximum likelihood mode(s)} for non-flat PDFs.

\begin{definition}
For any non-flat PDF, there are a finite set of points, the maximum likelihood modes $\mathcal{M}_1 = \{x : p_1(x) = \sup p_1\}$ where the PDF reaches it maximum value. An alternate definition is $\mathcal{M}_1 = \lim_{\alpha \rightarrow 0^+} M_1(\alpha)$.
\end{definition}

\begin{property}
For $p_c$ to be strictly conservative w.r.t. $p_t$, it must be true that $\mathcal{M}_t \subseteq \mathcal{M}_c$.  
\end{property}
\begin{proof} If this were not true, then for small enough $\alpha$, there would exist an $M_t(\alpha)$ such that $M_t(\alpha) \not \subseteq M_c(\alpha)$, which is a contradiction.\end{proof} 

Now consider the data fusion problem: we desire a fusion rule $\mathcal{F}$ for fusing two PDFs, $p_1$ and $p_2$, such that $p_f \propto \mathcal{F}(p_1, p_2)$.  Furthermore, assume we can write the input information as
\begin{equation*}
p_1(x) \propto p_{1 \setminus C}(x) p_C(x), \ \ \ p_2(x) \propto p_{2 \setminus C}(x) p_C(x) \;,
\end{equation*}
where $p_C$ is the common information in $p_1$ and $p_2$. The goal of $\mathcal{F}$ is to generate a conservative approximation of $p_t \propto p_{1 \setminus C}p_{2\setminus C}p_C$ even when $p_C$ is not known.  

If $p_{1\setminus C}$, $p_{2\setminus C}$, and $p_C$ are Gaussian distributions with different means, then the sets $\mathcal{M}_1,\mathcal{M}_2$ and $\mathcal{M}_t$ are all single points located at the mean of the distribution.  Unfortunately, determining $\mathcal{M}_t$ is not possible from $p_1$ and $p_2$ alone since it requires knowledge of $p_C$.  Because $\mathcal{M}_f \supseteq \mathcal{M}_t$ must be true for $p_f$ to be strictly conservative w.r.t. $p_t$, creating a fusion rule that generates strictly conservative PDFs is impossible, even for the simple Gaussian case.  Therefore, in the following section we introduce a weaker definition of conservative that does not require that $\mathcal{M}_c = \mathcal{M}_t$ but that preserves some of the positive attributes of the strictly conservative definition.

\subsection{Weakly Conservative: A Definition of Conservativeness for Data Fusion}
\label{ss:WeaklyCons}

To create a definition of $p_c$ being conservative w.r.t. $p_t$ that can be used to evaluate fusion rules and is easy to verify, we introduce three conditions that capture much of the intuition behind Definition~\ref{def: Original_Cons}. 

\begin{condition}
$supp(p_c)\supseteq supp(p_t)$.
\label{cond:SupportSuperset}
\end{condition}
We say that conditions \ref{cond:Broader_Cons2} and \ref{cond:Broader_Cons3} hold for a given $\alpha$ if
\begin{condition}
$P_t(M_t(\alpha)) \geq P_c(M_t(\alpha))$, and
\label{cond:Broader_Cons2}
\end{condition}
\begin{condition}
There exists $M_c(\alpha) \ s.t. \ P_t(M_c(\alpha)) \geq P_c(M_c(\alpha)).$
\label{cond:Broader_Cons3}
\end{condition}
In the next Proposition, we connect these three conditions to the definition of strictly conservative. We prove this Proposition in the Appendix.

\begin{proposition}
If Conditions \ref{cond:SupportSuperset}, \ref{cond:Broader_Cons2}, and \ref{cond:Broader_Cons3} hold for all $\alpha \in (0, 1]$, this is necessary, but not sufficient, for $p_c$ to be strictly conservative w.r.t. $p_t$.
\label{prop: ThreeCond_StrictCons}
\end{proposition}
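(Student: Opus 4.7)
The plan is to decompose the claim into two parts: necessity (strict conservativeness implies Conditions~\ref{cond:SupportSuperset}--\ref{cond:Broader_Cons3}) and non-sufficiency (an explicit counterexample satisfying the three conditions that fails strict conservativeness).

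For necessity, assume $p_c$ is strictly conservative w.r.t.\ $p_t$, so that for every $\alpha \in (0,1]$ one can pick $M_c(\alpha) \supseteq M_t(\alpha)$. Condition~\ref{cond:SupportSuperset} is obtained by taking $\alpha = 1$, since $M_p(1) = \mathrm{supp}(p)$. Condition~\ref{cond:Broader_Cons2} follows from monotonicity of $P_c$ together with the identity $P_p(M_p(\alpha)) = \alpha$ for non-flat continuous densities:
\begin{equation*}
P_c(M_t(\alpha)) \leq P_c(M_c(\alpha)) = \alpha = P_t(M_t(\alpha)).
\end{equation*}
Condition~\ref{cond:Broader_Cons3} uses the same witness $M_c(\alpha)$: $P_t(M_c(\alpha)) \geq P_t(M_t(\alpha)) = \alpha = P_c(M_c(\alpha))$.

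For non-sufficiency, I would exhibit the one-dimensional Gaussian pair $p_t = \mathrm{N}(0, 1)$ and $p_c = \mathrm{N}(\mu, \sigma^2)$ with a small shift and moderately inflated variance, e.g.\ $\mu = 1/2$, $\sigma = 2$. The unique MV sets are $M_t(\alpha) = [-z_\alpha, z_\alpha]$ and $M_c(\alpha) = [\mu - \sigma z_\alpha,\, \mu + \sigma z_\alpha]$ with $z_\alpha = \Phi^{-1}((1+\alpha)/2)$. Whenever $z_\alpha < \mu/(\sigma - 1)$, i.e.\ for small enough $\alpha$, the set $M_c(\alpha)$ does not contain $M_t(\alpha)$, so strict conservativeness fails. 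Condition~\ref{cond:SupportSuperset} holds trivially since both supports are $\mathbb{R}$, while Conditions~\ref{cond:Broader_Cons2} and~\ref{cond:Broader_Cons3} reduce to the sign statements
\begin{equation*}
g(z) := \Phi\!\left(\tfrac{z-\mu}{\sigma}\right) + \Phi\!\left(\tfrac{z+\mu}{\sigma}\right) - 2\Phi(z) \leq 0, \qquad h(z) := \Phi(\sigma z + \mu) + \Phi(\sigma z - \mu) - 2\Phi(z) \geq 0,
\end{equation*}
required for all $z \geq 0$.

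The main obstacle is verifying $g \leq 0$ and $h \geq 0$ uniformly. For $g$, a monotonicity-in-$\mu$ argument handles it cleanly: $\partial_\mu g < 0$ for $\mu, z > 0$ (using $|(z+\mu)/\sigma| > |(z-\mu)/\sigma|$, hence $\phi((z+\mu)/\sigma) < \phi((z-\mu)/\sigma)$), so $g(\mu,z) \leq g(0,z) = 2\Phi(z/\sigma) - 2\Phi(z) \leq 0$ as soon as $\sigma \geq 1$. For $h$, monotonicity in $\mu$ goes the wrong way, so I would instead study $h$ directly as a function of $z$: one has $h(0) = 0 = \lim_{z\to\infty} h(z)$, and the condition $\sigma > e^{\mu^2/2}$ (easily met by the chosen parameters) makes $h'(0) > 0$. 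Showing that $h'$ changes sign only once on $[0,\infty)$ then yields the single-bump shape that keeps $h \geq 0$ throughout. This derivative analysis is the only computationally delicate step; everything else in the proof is structural.
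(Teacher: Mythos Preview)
Your necessity argument is correct and is essentially the paper's: once you have $M_c(\alpha)\supseteq M_t(\alpha)$, Conditions~\ref{cond:Broader_Cons2} and~\ref{cond:Broader_Cons3} follow from monotonicity of measures together with $P_p(M_p(\alpha))=\alpha$, and Condition~\ref{cond:SupportSuperset} is the case $\alpha=1$.

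Where you diverge from the paper is the counterexample for non-sufficiency. The paper does \emph{not} use shifted Gaussians; it takes $p_t=\mathrm{N}(0,4)$ and builds $p_c$ by carving a constant ``notch'' into $p_t$ on $[1,2]$ and depositing the removed mass as a constant block on $[10,15]$. Because $p_c=p_t$ outside those two intervals, Conditions~\ref{cond:SupportSuperset}--\ref{cond:Broader_Cons3} can be checked directly, while the notch immediately breaks $M_t(\alpha)\subseteq M_c(\alpha)$ at the corresponding level. Your shifted-Gaussian approach is more elegant in spirit but considerably more fragile: the paper's own Example~\ref{ex:nonEqualMeans} (Figure~\ref{fig:weakly_example}) shows that with $p_t=\mathrm{N}(0,1)$ and $p_c=\mathrm{N}(1,1.5^2)$, Condition~\ref{cond:Broader_Cons3} \emph{fails} for $\alpha\lesssim 0.665$. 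So whether your construction works depends delicately on the choice of $(\mu,\sigma)$.

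For your specific parameters $(\mu,\sigma)=(1/2,2)$, numerics suggest $h(z)\geq 0$ does hold for all $z\geq 0$, so the example is probably salvageable---but you have not actually proven it. The step you flag, that $h'$ has a single sign change on $[0,\infty)$, is genuinely nontrivial: $h'(z)=\sigma[\phi(\sigma z+\mu)+\phi(\sigma z-\mu)]-2\phi(z)$ is a difference of Gaussian-type bumps, and ruling out an intermediate dip of $h$ below zero requires more than boundary behaviour and $h'(0)>0$. Until that is established your counterexample is incomplete, whereas the paper's notch construction sidesteps the analysis entirely.
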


We now motivate these conditions. Condition~\ref{cond:Broader_Cons2} says that $P_c$ should assign less probability than $P_t$ to the MV sets of $p_t$.  This is similar to the motivation for conservativeness given in Example \ref{ex:Robot}. Condition~\ref{cond:Broader_Cons3} requires that areas of high probability for $p_c$ are also areas of high probability for $p_t$.  

\begin{definition}
A PDF $p_c$ is \emph{weakly conservative} w.r.t. $p_t$ if Condition~\ref{cond:SupportSuperset} is met and Conditions~\ref{cond:Broader_Cons2} and \ref{cond:Broader_Cons3} hold for all $\alpha \in [\alpha',1]$, where $0 \leq \alpha' < 1$.
\label{def: Broader_Cons}
\end{definition}
We note that as $\alpha'$ decreases, the ``similarity'' of the two PDFs should also increase.  Consider the following four examples that illustrate when a PDF is and is not weakly conservative. In later sections we discuss how this definition compares with other definitions and how it can be used to evaluate fusion rules.

\begin{figure*}
	\centering
	\minipage{0.32\textwidth}
	\includegraphics[width=1\linewidth]{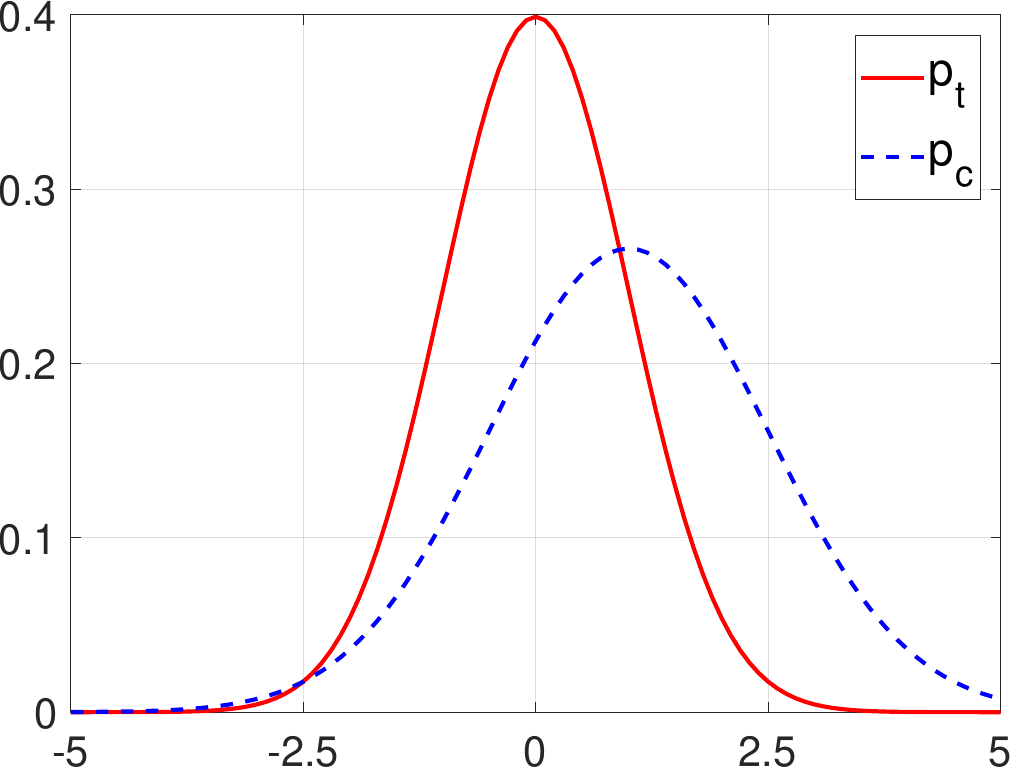}
	\centering (a)
	\endminipage
	\hspace{1mm}
	\minipage{0.32\textwidth}
	\includegraphics[width=1\linewidth]{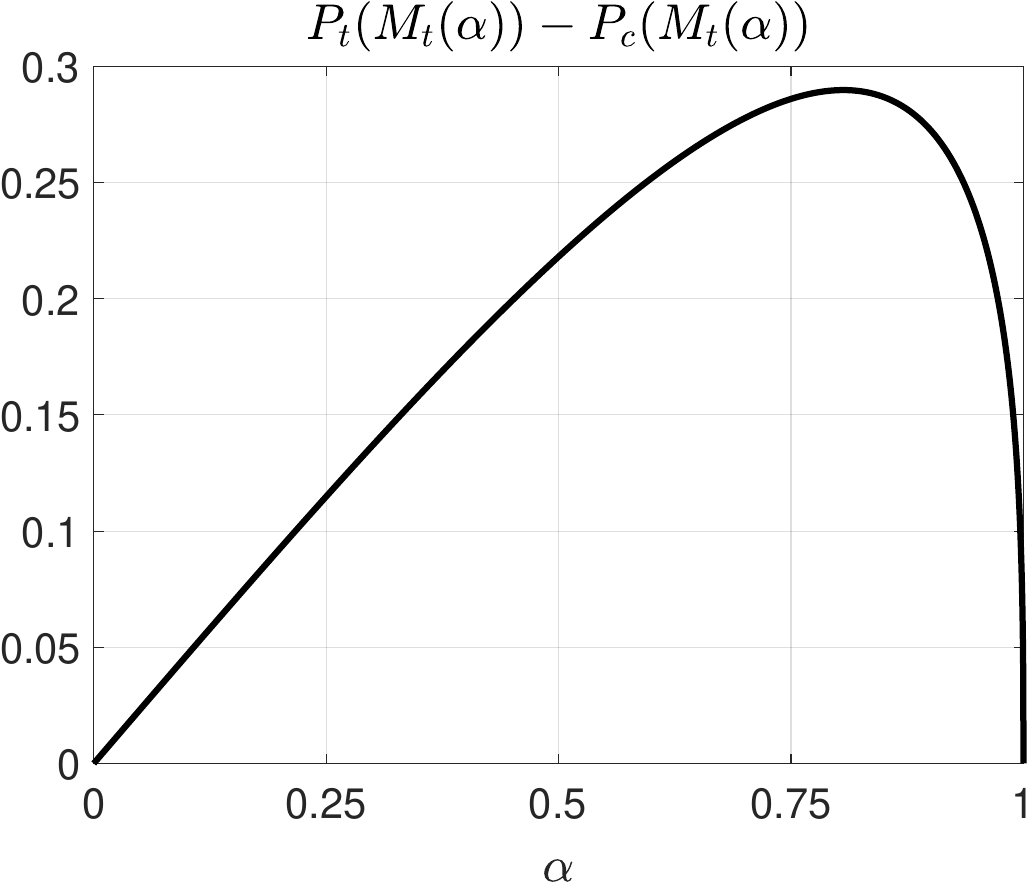}
	\centering (b)
	\endminipage
	\hspace{1mm}
	\minipage{0.32\textwidth}%
	\includegraphics[width=1\linewidth]{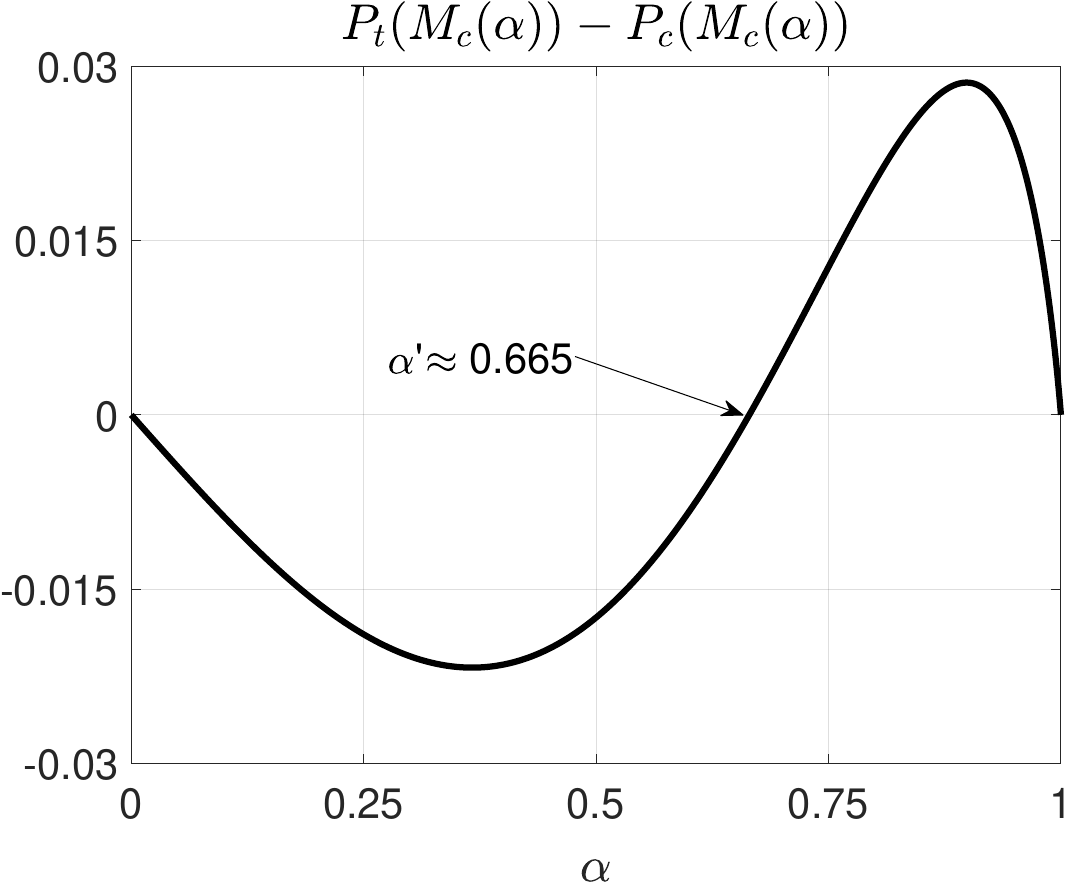}
	\centering (c)
	
	\endminipage
	\caption{On the left we plot $p_t = \mathrm{N}(0, 1)$ and $p_c = \mathrm{N}(1, 2.25)$. $p_c$ is weakly, but not strictly, conservative w.r.t $p_t$. To illustrate this point, in the middle figure above we plot $P_t(M_t(\alpha)) - P_c(M_t(\alpha))$ for $\alpha \in [0, 1]$. On the right, we plot $P_t(M_c(\alpha)) - P_c(M_c(\alpha))$ for $\alpha \in [0, 1].$ Note that Condition \ref{cond:Broader_Cons2} is satisfied for all $\alpha$, while Condition \ref{cond:Broader_Cons3} is only satisfied for $\alpha > \alpha'$, where $\alpha' \approx 0.65.$}
	\label{fig:weakly_example}
\end{figure*}


\begin{example}
Let $p_1=\mathrm{N}(\mu_1,\Sigma_1)$ and $p_2=\mathrm{N}(\mu_2,\Sigma_2)$. Then $p_1$ is weakly, but not strictly conservative w.r.t. $p_2$ if $\mu_1 \neq \mu_2$ and $\Sigma_1 \succ \Sigma_2$. Consider Figure \ref{fig:weakly_example}, where $p_t = \mathrm{N}(0,1)$ and $p_c=\mathrm{N}(1,1.5^2)$.  Because both distributions are Gaussian, Condition 1 is met.  As subfigures \ref{fig:weakly_example}(b)  and \ref{fig:weakly_example}(c) show, conditions 2 and 3 are also met for $\alpha \in [\alpha', 1]$, with $\alpha' \approx 0.665.$
\label{ex:nonEqualMeans}
\end{example}

\begin{example}
Consider two mixtures of Gaussians $p_1$ and $p_2$,
\begin{equation}
p_1 = \sum_{i = 1}^n \omega_i \mathrm{N}(\mu_i,\Sigma_i), \ \ p_2 = \sum_{i = 1}^m \epsilon_i\mathrm{N}(\nu_i, \Phi_i) \;,
\label{eq: GaussianMix}
\end{equation}
where $\sum_{i = 1}^n \omega_i = \sum_{i = 1}^m \epsilon_i = 1$,  $\mu_i, \ \nu_i \in \mathbb{R}^k$ and $\Sigma_i, \Phi_i \in \mathbb{R}^{k \times k}.$ Assume there exists an index $\ell$ such that $\Phi_\ell \succeq \Phi_i$ for $i = 1, \dotsc, m$ and an index $k$ such that $\Sigma_k \succeq \Sigma_i$ for $i = 1, \dotsc, n$. If $\Sigma_k \succ \Phi_\ell$ then $p_1$ is weakly conservative w.r.t. $p_2$.
\label{ex: MoG}
\end{example}

\begin{example}
Let $p_1 = \mathrm{N}(0, 1)$ and $p_2 = \mathrm{N}(1, 1)$. Then $p_1$ is \emph{not} weakly conservative w.r.t. $p_2$.
\end{example}
 
\begin{example}
    Let $p_1$ and $p_2$ be skew-normal distributions, defined as $p(x) = \frac{2}{\sqrt{2\pi}}e^{-x^2/2}\Phi(sx)$, where $\Phi(y)$ is the CDF of the normal distribution.  If $s_1 \neq 0$ and $s_1 = -s_2$, then neither distribution will be weakly conservative w.r.t. the other.
\end{example}

\section{Comparison with Previous Definitions}
\label{s:ComparePrevious}
In this section, we describe the relationship between the strictly and weakly conservative definitions with previous definitions of conservativeness. In the first subsection, we focus on Gaussian distributions, followed by a discussion on other distributions.

\subsection{Gaussian PDFs}  
When determining whether $p_c = \mathrm{N}(\mu_c,\Sigma_c)$ is conservative w.r.t. $p_t = \mathrm{N}(\mu_t,\Sigma_t)$ (Question \#1 in the introduction), there are several definitions of conservativeness that one can use.  These definitions include the definitions discussed in Section \ref{ss:priorDefs} -- the p.s.d definition that apply specifically to Gaussians, and the the GEOP and GEKL definitions for general distributions -- and the two definitions proposed in this work: strictly conservative (SC -- Definition \ref{def: Original_Cons}) and weakly conservative (WC -- Definition \ref{def: Broader_Cons}).

In Table~\ref{tab:gaussCompare}, we show when $p_c$ is conservative w.r.t $p_t$ for each definition.  Each row represents various relationships between $p_c$ and $p_t$, and each column represents a different definition of conservative.  In general, the columns are ordered from most restrictive to least restrictive. In \ref{a:tableProofs}, we provide justification for several entries in Table~\ref{tab:gaussCompare}.

\begin{table*}[t]
	\begin{centering}
		\scalebox{0.9}{
		\begin{tabular}{lr|c|c|c|c|c}
			&&\textbf{GEOP} & \textbf{SC} & \textbf{WC} & \textbf{p.s.d.} & \textbf{GEKL}\\\hline\hline
			&\textbf{$\Sigma_c = k\Sigma_t, k \geq 1$} & $\checkmark$ & $\checkmark$ & $\checkmark$ & $\checkmark$ & $\checkmark$\\\cline{2-7}
			\textbf{$\mu_c = \mu_t$} & \textbf{$\Sigma_c \succeq \Sigma_t$} & X & $\checkmark$ & $\checkmark$ & $\checkmark$& $\checkmark$\\\hline
	        $\mathbf{d}=\mu_c-\mu_t$ &\textbf{$\Sigma_c \succeq \Sigma_t + \mathbf{dd}^\top$} & X & X & $\checkmark$ &  $\checkmark$ & /\\\cline{2-7}
			\textbf{$\mu_c \neq \mu_t$} & $\Sigma_c \succeq \Sigma_t + k\mathbf{dd}^\top, k > 0$ & X & X & $\checkmark$ & X & /\\\cline{2-7}
			&\textbf{$\Sigma_c \succeq \Sigma_t$} & X & X & X & X & /\\\hline
			&\textbf{$\Sigma_c \nsucceq \Sigma_t$} & X & X & X & X & /
		\end{tabular}} 
		\caption{This table summarizes when different definitions of conservative will be considered true for Gaussian distributions.  If there is a $\checkmark$, then $p_c=\mathrm{N}(\mu_c,\Sigma_c)$ is conservative w.r.t. $p_t=\mathrm{N}(\mu_t, \Sigma_t)$, an ``X" means it is never true, and a ``/'' means it is sometimes true.  To make this table exact, each row is assumed to not include the conditions covered by the row above it.  For example, the row $\Sigma_c \succeq \Sigma_t$ does not include the cases when $\Sigma_c \succ \Sigma_t$.}  
		\label{tab:gaussCompare}
	\end{centering}
\end{table*}

Table~\ref{tab:gaussCompare} has a couple of interesting results. First, note that when $\mu_c=\mu_t$, SC, WC and p.s.d. conservative all consider the same sets of PDFs to be conservative.  If $\mu_c\neq\mu_t$, the relationship between p.s.d. and the WC definition is a bit more nuanced.  Note that the p.s.d. definition really pertains to Question \#2, requiring the covariance matrix to be larger (in the p.s.d. sense) \emph{than the underlying random variable's second moment}.  If $N(\mu_t,\Sigma_t)$ is the optimal PDF, then to be p.s.d. conservative,
\begin{equation}
    \Sigma_c \succeq \Sigma_t + (\mu_c-\mu_t)(\mu_c-\mu_t)^\top
    \label{eq:psdVariance}
\end{equation}
assuming $\mu_c$ and $\mu_t$ are column vectors.  WC, however, only guarantees that $\Sigma_c \succeq \Sigma_t$ \emph{and} that the effects of the different means are overcome before $\alpha=1$.  Note that in Table~\ref{tab:gaussCompare}, anything that is p.s.d. conservative is also WC, but WC has weaker requirements for how much the covariance will increase given a difference between $\mu_c$ and $\mu_t$.

\subsection{Non-Gaussian PDFs}
When comparing definitions of conservative for non-Gaussian PDFs, the p.s.d. definition cannot be included.  We also have the following properties relating different definitions:
\begin{itemize}
	\item Any SC distribution is also WC.
	\item The maximum likelihood modes have to be the same between two distributions for both the GEOP and SC distributions.
	\item The GEKL distribution does not correspond well with the intuitive understanding of conservativeness inherent in SC or p.s.d. definitions.
\end{itemize}
Therefore, we focus on comparing the GEOP and SC definitions of conservative.  By example, we prove that the sets of GEOP and SC functions overlap (Example~\ref{ex: Exp_GEOP_SC_Equiv}), but that neither is a strict subset of the other (Examples \ref{ex:SCnotGEOP1}-\ref{ex: GEOP_Not_SC}).  

\begin{example}
	Let $p_c = \text{Exponential}(\lambda_c)$ and $p_t = \text{Exponential}(\lambda_t)$. The PDF $p_c$ is a strictly conservative w.r.t $p_t$ if and only if $p_c$ is GEOP conservative w.r.t $p_t$.
	\label{ex: Exp_GEOP_SC_Equiv}
\end{example}
\begin{proof}
	To begin, assume $p_c$ is a GEOP approximation of $p_t$. Recall that the entropy of Exponential$(\lambda)$ is $1 - \log(\lambda)$. Since $H(p_c) \geq H(p_t)$, it follows that $\lambda_c \leq \lambda_t$. Let $\alpha \in [0, 1)$. We construct a MV set of the form $[0, x_c]$ with area $\alpha$ for $p_c$. A similar construction holds for MV sets of the form $[0, x_t]$ for $p_t$. The value of $x_c$ must satisfy $\alpha = \int_0^{x_c} p_c(y) \ dy$, so
	\begin{equation*}
	x_c = -\frac{\log(1 -\alpha)}{\lambda_c} \;.
	\end{equation*}
	Since $\lambda_c \leq \lambda_t$, we see that $x_c \geq x_t$, so $M_c(\alpha) \supseteq M_t(\alpha)$. The case when $\alpha = 1$ is easy because $M_c(\alpha) = M_t(\alpha) = [0, \infty)$. Therefore, $M_c(\alpha) \supseteq M_t(\alpha)$ for all $\alpha \in [0,1]$, so $p_c$ is a strictly conservative approximation of $p_t$. 
	
	Now assume that $p_c$ is a strictly conservative approximation of $p_t$. Since the exponential distribution is monotonically decreasing, the order-preservation property holds trivially. All that remains is to show that $H(p_c) \geq H(p_t).$ Using the above calculations for $x_c$ and $x_t$, it must be that $\lambda_c \leq \lambda_t$. If not, then $M_c(\alpha) \not \supseteq M_t(\alpha)$ for some $\alpha$, which is a contradiction. Then, $1 - \log(\lambda_c) \geq 1 - \log(\lambda_t)$, so $H(p_c) \geq H(p_t).$ We conclude then that $p_c$ is a GEOP conservative approximation of $p_t.$
\end{proof}

\begin{example}
	Let $p_t = \mathrm{U}(a, b)$ and $p_c = \mathrm{U}(c, d)$. If $(c,d) \supset (a,b)$, then $p_c$ is strictly, but not GEOP conservative, w.r.t. $p_t$.
	\label{ex:SCnotGEOP1}
\end{example}
\begin{proof}
	Let $\alpha \in (0, 1).$ Since $p_c$ and $p_t$ are ``flat'', the MV sets are not unique. In this proof, we will just show that there exists MV sets that satisfy the requirement $M_c(\alpha) \supseteq M_t(\alpha).$ It is easy to see that one such example of a MV set for $p_c$ with area $\alpha$ is
	\begin{equation*}
	M_c(\alpha) =  \left( \frac{a + b}{2} - \frac{\alpha}{2(b-a)}, \frac{a+b}{2} + \frac{\alpha}{2(b-a)} \right) \;.
	\end{equation*} 
	In a similar way, one MV set for $p_t$ with area $\alpha$ is
	\begin{equation*}
	M_t(\alpha) = \left( \frac{a+b}{2} - \frac{\alpha}{2(d-c)}, \frac{a+b}{2} + \frac{\alpha}{2(d-c)} \right) \;.
	\end{equation*}
	Note that these sets are centered at the mean of $p_t$, $\frac{a+b}{2}.$ Since $(c, d) \supset (a, b)$, it follows that $d - c \geq b - a$, so $M_c(\alpha) \supseteq M_t(\alpha).$ To prove that $p_c$ is not a GEOP conservative approximation of $p_t$, choose two points, $x_1\in(a,b)$ and $x_2\in(c,d) \cap (a,b)^c$.  In this case $p_c(x_2) \geq p_c(x_1)$, which to be GEOP conservative requires $p_t(x_2) \geq p_t(x_1)$.  Because it does not, this is not GEOP conservative.
\end{proof}

\begin{example}
	Let $p_c = \mathrm{N}(0,3)$ and 
	$$p_t(x) = \begin{cases}
	k_1\mathrm{N}(x;0,1) &: x \leq 0 \\
	k_2\mathrm{N}(x;0,2) &: x \geq 0
	\end{cases}$$
	where $k_1$ and $k_2$ are chosen so the integral of $p_t$ is 1 and $p_t$ is continuous ($k_1 \approx .83$, $k_2 \approx 1.17$).  Because $p_t$ is non-symmetric and $p_c$ is symmetric, $p_c$ cannot be GEOP conservative w.r.t. $p_t$, but is strictly conservative.
	\label{ex:SCnotGEOP2}
\end{example}

While there are many such examples where a function can be SC, but not GEOP, being GEOP is not a sufficient condition for being SC as shown in the following example.
\begin{example}
	Let $p_t = \text{Exponential}(1)$ and
	$$p_c(x) = \begin{cases}
	e^{-\lambda x}, & 0 \leq x \leq 1\\
	k e^{-\lambda x}, & 1 < x
	\end{cases}$$
	where $\lambda < 1$ and $k$ is such that $p_c$ integrates to 1\footnote{Specifically, $k=e^\lambda(\lambda-1) + 1$}. We plot $p_c$ in  Figure~\ref{fig:GEOPnotSC} with $\lambda = 0.8$. Note that $p_c$ and $p_t$ are decreasing, so $p_c$ is order preserving w.r.t. $p_t$. When $\lambda = 0.8$, $H(p_t) = 1$ and $H(p_c) \approx 1.48$. So $p_c$ is GEOP conservative w.r.t. $p_t.$ However, $p_c$ is not strictly conservative w.r.t. $p_t$ because for small $\alpha, \ M_t(\alpha) \not \subseteq M_c(\alpha).$
	\label{ex: GEOP_Not_SC}
\end{example}

As these examples show, neither GEOP nor SC conservative is a subset of the other definition.

\begin{figure}[t]
	\begin{subfigure}{.47\textwidth}
	\centerline{\includegraphics[width=.95\linewidth]{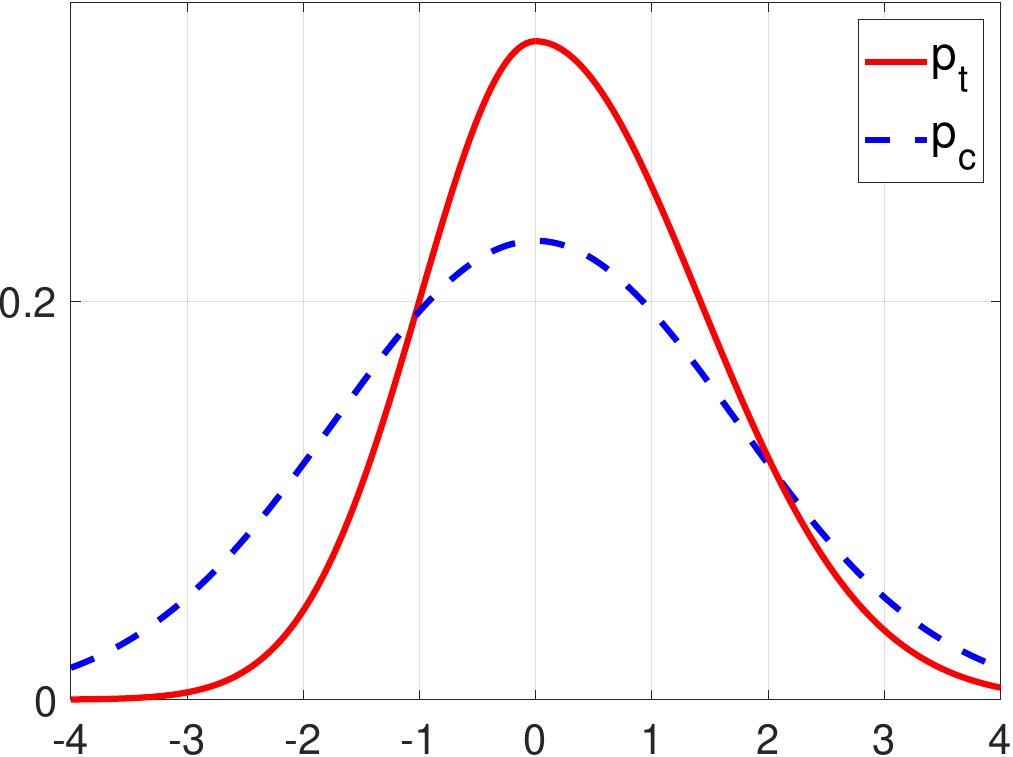}}
	\caption{$p_t$ and $p_c$ from Example \ref{ex:SCnotGEOP2}.  $p_c$ is SC, but not GEOP, w.r.t $p_t$.}
	\end{subfigure}
	\hspace{.05\textwidth}
	\begin{subfigure}{.48\textwidth}
	\centerline{\includegraphics[width=.95\linewidth]{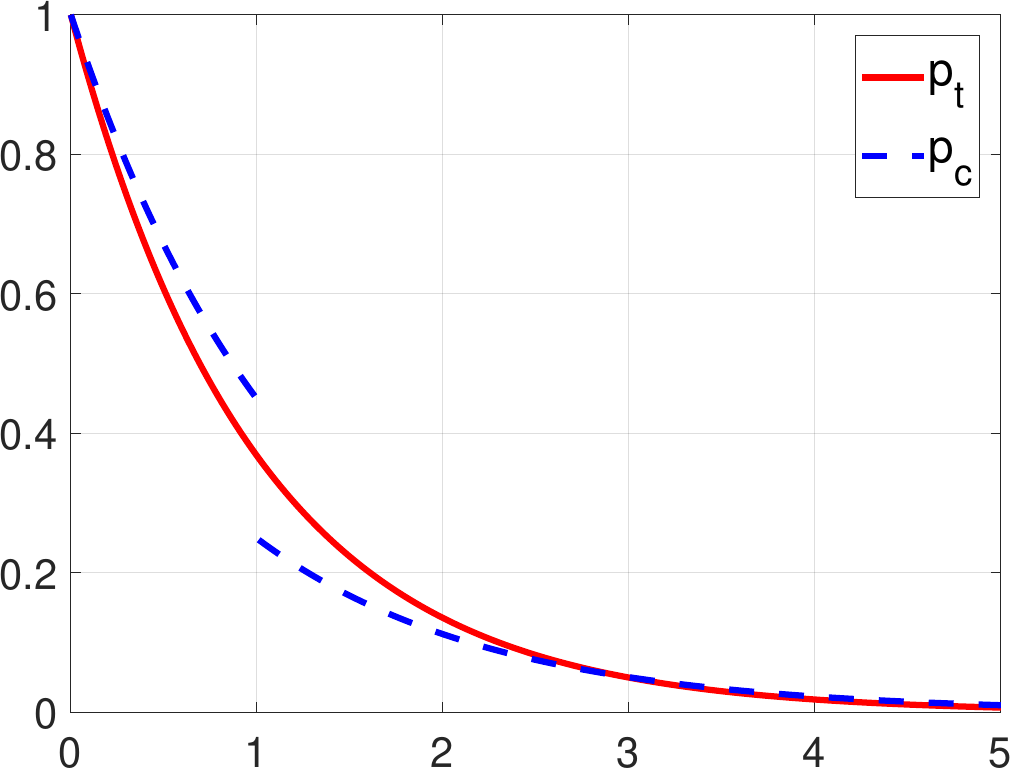}}
	\caption{$p_t$ and $p_c$ from Example \ref{ex: GEOP_Not_SC} with $\lambda = 0.8$. $p_c$ is GEOP, but not SC, w.r.t $p_t.$}
	\end{subfigure}
	\caption{Example PDFs demonstrating the disjoint nature of the GEOP and SC definitions.  Subfigure (a) is SC but not GEOP conservative, while (b) is GEOP but not SC.}
	\label{fig:GEOPnotSC}
\end{figure}

\section{Applications of Conservative Definitions}
\label{s:Applications}
In this section, we demonstrate the utility of having formal definitions of conservativeness.  In particular, we analyze two fusion rules that have seen significant interest in the research literature lately (e.g.~\cite{li2019second,campbell2016distributed,Julier,ahmed2012fast,hurley2002information,bishop2014information,da2021recent}): the log-linear (also called Chernoff or geometric average) fusion and linear (also called arithmetic average) fusion.  While both of these techniques have been proven to be conservative in the Gaussian case, there is also interest in extending these techniques to the non-Gaussian case where there has not previously been any formal proof of their conservativeness.  Using the definition of weakly conservative introduced earlier, we are able to prove the conservativeness of these commonly used data fusion techniques for non-Gaussian PDFs that have infinite support on $\mathbb{R}^m$.

Further demonstration of the utility of the weakly conservative definition is given by proving that a generalization of the linear and log-linear averaging techiques -- power mean fusion -- can also be shown to be conservative.  Finally, we also prove that Bayesian fusion of conservative PDFs also yields conservative PDFs.  

This section is organized as follows.  In the first sub-section, we introduce two properties of weakly conservative that can be used to prove conservativeness.  In the second sub-section, we prove that the linear, log-linear, and power mean fusion rules generate conservative PDFs.  The third sub-section proves that when using a Bayesian update to combine two PDFs, if one of the input PDFs is the output of a (previously discussed) conservative fusion rule, then the output is also a conservative approximation.  Note that proving these properties has not been performed previously for non-Gaussian distributions and represent novel contributions of this paper in their own right.

\subsection{Useful Properties of Weakly Conservative}
The following propositions are often useful to determine if a PDF is weakly conservative.  
\begin{proposition}
	Let $p_c$ and $p_t$ be PDFs that satisfy Condition \ref{cond:SupportSuperset}. Let  $A = \{x: p_c(x) < p_t(x) \}$ and $\varepsilon = \inf_{x \in A} p_c(x)$. If $P_c(\{x: \ p_c(x)<\varepsilon\}) > 0$ and $P_t(\{x: p_t(x) < \varepsilon\}) > 0$ then $p_c$ is weakly conservative w.r.t. $p_t$. 
	\label{prop: AlphaCondSuff}
\end{proposition}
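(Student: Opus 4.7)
The plan is to verify Conditions \ref{cond:Broader_Cons2} and \ref{cond:Broader_Cons3} on an interval $[\alpha',1]$ with $\alpha'<1$ (Condition \ref{cond:SupportSuperset} is assumed). The pivotal observation is that, by Property \ref{prop: LevelSet_MV}, we may write $M_t(\alpha)=S_t(\beta_t(\alpha))$ and $M_c(\alpha)=S_c(\beta_c(\alpha))$, and whenever $\beta_t(\alpha),\beta_c(\alpha)<\varepsilon$, both super-level sets engulf $A$: indeed, for $x\in A$ the definition of $\varepsilon$ gives $p_c(x)\geq\varepsilon$, and hence $p_t(x)>p_c(x)\geq\varepsilon$ as well, so both strictly exceed the level. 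Once $A\subseteq M_\star(\alpha)$ for $\star\in\{t,c\}$, the complement $M_\star(\alpha)^\mathsf{c}$ sits inside $A^\mathsf{c}=\{p_c\geq p_t\}$, and using that both PDFs integrate to $1$,
\begin{equation*}
P_t(M_\star(\alpha))-P_c(M_\star(\alpha)) \;=\; P_c(M_\star(\alpha)^\mathsf{c})-P_t(M_\star(\alpha)^\mathsf{c}) \;=\; \int_{M_\star(\alpha)^\mathsf{c}}(p_c-p_t)\,dx \;\geq\; 0,
\end{equation*}
which is exactly Conditions \ref{cond:Broader_Cons2} and \ref{cond:Broader_Cons3}.

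The remaining task is to produce $\alpha'\in[0,1)$ that actually forces $\beta_t(\alpha),\beta_c(\alpha)<\varepsilon$ for every $\alpha\geq\alpha'$. First I would note $\varepsilon>0$: if it were zero, the set $\{p_c<\varepsilon\}$ would be empty, contradicting the hypothesis $P_c(\{p_c<\varepsilon\})>0$. Next, applying continuity of measure from below to the increasing unions $\{p_t<\varepsilon-1/n\}\uparrow\{p_t<\varepsilon\}$ and $\{p_c<\varepsilon-1/n\}\uparrow\{p_c<\varepsilon\}$, the two positivity hypotheses upgrade to $P_t(\{p_t<\varepsilon-\delta\})>0$ and $P_c(\{p_c<\varepsilon-\delta\})>0$ for some common $\delta>0$. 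Setting $\beta^*:=\varepsilon-\delta$, both $P_t(S_t(\beta^*))$ and $P_c(S_c(\beta^*))$ are strictly less than $1$, and I would take $\alpha'$ equal to the larger of these two probabilities. By Property \ref{prop: Increasing_MV} (nesting of MV sets), the MV levels at any $\alpha\geq\alpha'$ are at most $\beta^*<\varepsilon$, which triggers the argument of the previous paragraph.

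The main obstacle is exactly this last step: converting the qualitative hypothesis ``$P(p<\varepsilon)>0$'' into the quantitative inequality ``$\beta(\alpha)<\varepsilon$ on a full neighborhood of $\alpha=1$''. Without the continuity-of-measure upgrade one could only say that $\beta_t(\alpha),\beta_c(\alpha)\to 0$ as $\alpha\to 1$, which does not by itself yield $\alpha'<1$. A secondary subtlety is non-uniqueness of $M_c(\alpha)$ in the presence of flat regions of $p_c$, but Condition \ref{cond:Broader_Cons3} only requires the \emph{existence} of one MV set with the desired property, and the super-level representative provided by Property \ref{prop: LevelSet_MV} is always a valid choice.
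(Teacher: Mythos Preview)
Your proposal is correct and follows the same route as the paper: for $\alpha$ near $1$ the relevant MV set contains $A$, so its complement lies in $\{p_c\geq p_t\}$, whence $P_t(M_\star(\alpha))-P_c(M_\star(\alpha))=\int_{M_\star(\alpha)^\mathsf{c}}(p_c-p_t)\,dx\geq 0$. The paper is slightly more direct, taking $\alpha'=P_c(S_c(\varepsilon))$ (strictly below $1$ by hypothesis) and then citing a symmetric argument for Condition~\ref{cond:Broader_Cons2}, rather than passing through $\varepsilon-\delta$ via continuity of measure; since $p_c\geq\varepsilon$ on $A$ by definition of the infimum, the inclusion $A\subseteq S_c(\varepsilon)$ holds already, so your upgrade step is unnecessary (though harmless).
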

\begin{proof}
	We prove that Condition \ref{cond:Broader_Cons3} holds for all $\alpha$ in some interval. We define $\alpha'$ by
	\begin{equation*}
	\alpha' = \int_{S_c(\varepsilon)}p_c(x) \ dx \;.
	\end{equation*}
	That $\alpha' < 1$ follows because $P_c(\{x: p_c(x)<\varepsilon\}) > 0$.  For any $\alpha \in [\alpha',1)$, $M_c(\alpha) \supseteq A$.  To show this, note that $M_c(\alpha') = \{x: p_c(x) \geq \varepsilon\}$ by the definition of $\varepsilon$ and Property \ref{prop: LevelSet_MV}. If $x \in A$, it follows that $p_c(x) \geq \epsilon$, so $x \in M_c(\alpha')$ and $M_c(\alpha') \supseteq A.$ 
	
	We use Property \ref{prop: Increasing_MV} to conclude that for $\alpha \in [\alpha', 1), \ M_c(\alpha) \supseteq A.$ To continue, note that for $x \in M^\mathsf{c}_c(\alpha), \ p_c(x) \geq p_t(x)$
	because $M^c_c(\alpha) \subseteq A^c.$ Thus $P_c(M^\mathsf{c}_c(\alpha)) \geq P_t(M^\mathsf{c}_c(\alpha))$. Since $P_t(M^\mathsf{c}_c(\alpha)) + P_t(M_c(\alpha)) = P_c(M^\mathsf{c}_c(\alpha)) + P_c(M_c(\alpha))$, we conclude that $P_t(M_c(\alpha)) \geq P_c(M_c(\alpha)).$ This proves Condition \ref{cond:Broader_Cons3} for all $ \alpha\in[\alpha',1)$. 
	With a similar argument, we can can conclude that Condition \ref{cond:Broader_Cons2} holds for all $\alpha \in [\alpha'', 1)$ where $\alpha'' < 1$. Because both $\alpha'$ and $\alpha''$ are less than one, $p_c$ is a weakly conservative approximation of $p_t$.
\end{proof}

When working with PDFs with infinite support, the following Proposition is even more straightforward.
\begin{proposition}
	\label{prop:boundedA}
	Let $p_c$ and $p_t$ be PDFs with support $\mathbb{R}^m$. Let $A = \{x: p_c(x) < p_t(x)\}$. If $A$ is bounded, then $p_c$ is weakly conservative w.r.t $p_t.$
	\begin{proof}
		Let $A = \{x: p_c(x) < p_t(x)\}$ and let $\epsilon = \inf_{x \in A}p_c(x).$ Note that $\epsilon > 0$ because the support of $p_c$ is $\mathbb{R}^m$. Because $p_c$ and $p_t$ go to zero as $||x||^2 \rightarrow \infty$,
		the set $\{x: p_c(x) < \epsilon\}$ is unbounded. Because the support of each PDF is $\mathbb{R}^m$,  it follows that $P_c(\{x: p_c(x) < \epsilon\}) > 0$ and $P_t(\{x: p_t(x) < \epsilon\}) > 0$. We then use  Proposition \ref{prop: AlphaCondSuff} to conclude that $p_c$ is weakly conservative w.r.t $p_t.$
	\end{proof}
\end{proposition}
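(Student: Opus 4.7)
The plan is to invoke Proposition~\ref{prop: AlphaCondSuff} directly, with $\varepsilon := \inf_{x \in A} p_c(x)$. Condition~\ref{cond:SupportSuperset} is automatic since $\text{supp}(p_c) = \text{supp}(p_t) = \mathbb{R}^m$, so only three things remain to check: that $\varepsilon > 0$, that $P_c(\{x : p_c(x) < \varepsilon\}) > 0$, and that $P_t(\{x : p_t(x) < \varepsilon\}) > 0$.

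First I would establish $\varepsilon > 0$. Because $A$ is bounded, its closure $\bar A$ is compact, and because $\text{supp}(p_c) = \mathbb{R}^m$ the density $p_c$ is strictly positive; under the (implicit) continuity of $p_c$ used throughout the paper, the infimum is attained on the compact set $\bar A$ and is therefore strictly positive.

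Next I would verify the two probability-mass conditions by a uniform argument. Since $p_c$ is a probability density, Markov's inequality yields $\lambda(\{x : p_c(x) \geq \varepsilon\}) \leq 1/\varepsilon < \infty$, so the set $\{x : p_c(x) < \varepsilon\}$ has infinite Lebesgue measure and is, in particular, non-empty and unbounded. Combined with $\text{supp}(p_c) = \mathbb{R}^m$, which forces $p_c > 0$ on this set, this gives $P_c(\{x : p_c(x) < \varepsilon\}) > 0$. The identical reasoning applied to $p_t$ (which also integrates to $1$ and has full support) yields $P_t(\{x : p_t(x) < \varepsilon\}) > 0$. Invoking Proposition~\ref{prop: AlphaCondSuff} then completes the argument.

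The main obstacle, modest but worth handling with care, is the justification that $\varepsilon > 0$: pointwise positivity of $p_c$ on its own does not preclude $\inf_A p_c = 0$ on a bounded but not closed $A$. The cleanest remedy is to pass to $\bar A$, which is compact, and appeal to (lower semi)continuity of $p_c$ so that the infimum is attained at some $x^\star \in \bar A$ with $p_c(x^\star) > 0$. An alternative that avoids any regularity assumption is to replace $\varepsilon$ by any strictly smaller positive constant $\varepsilon'$ for which $\lambda\bigl(\{x : \varepsilon' \leq p_c(x) < \varepsilon\}\bigr) < \infty$ still holds; the hypotheses of Proposition~\ref{prop: AlphaCondSuff} remain satisfied and the same conclusion follows.
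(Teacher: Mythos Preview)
Your proof is correct and follows essentially the same route as the paper's: define $\varepsilon = \inf_{x\in A} p_c(x)$, argue $\varepsilon>0$, verify the two positive-mass hypotheses, and invoke Proposition~\ref{prop: AlphaCondSuff}. Your sub-arguments are in fact more careful than the paper's---you use compactness of $\bar A$ plus continuity to secure $\varepsilon>0$ (the paper simply asserts this from full support), and you use Markov's inequality rather than the tail decay $p\to 0$ to show the sub-level sets are unbounded; the only quibble is that your closing ``alternative'' remark does not actually avoid the regularity issue, since choosing $\varepsilon'<\varepsilon$ presupposes $\varepsilon>0$.
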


\subsection{Proving a Fusion Rule is Conservative}
\label{ss:FusionRules}
In this section, we prove that three previously used fusion rules produce weakly conservative PDFs w.r.t. the ideally (perfect knowledge) fused PDF under some limiting assumptions.  These assumptions include:
\begin{itemize}
    \item Each PDF has infinite support: $supp(p) = \mathbb{R}^m$.
    \item Each fusion rule takes in multiple (marginalized) PDFs, denoted $\{p_i\}_{i = 1}^n$
    \item Each  $p_i$ can be factored as $p_i(x) \propto p_C(x) p_{i \setminus C}(x)$, where $p_C(x)$ is the common information and both $p_C(x)$ and $p_{i \setminus C}(x)$ can be normalized to integrate to one across all $\mathbb{R}^m$.
    \item The ``true'' distribution (assuming the common information was perfectly known) can be represented as:
        \begin{equation}
        p_{t}(x) = \frac{1}{\eta_t}p_C(x)\prod_{i = 1}^n p_{i\setminus C}(x)\ dx  \;,
        \label{eq: DEF_TRUE}
        \end{equation}
        where $\eta_t$ is the normalizing constant. 
\end{itemize} 

Note that these assumptions apply to all proofs in this section.

\subsubsection{The Linear Opinion Pool}
The first fusion rule we study is the linear opinion pool (LOP), \cite{clemen1999combining} and \cite{Abbas2009}. The LOP method forms a convex combination $p_f$, given by
\begin{equation}
p_f(x) = \sum_{i = 1}^n \omega_i p_i(x), \ \ \ \text{with } \sum_{i = 1}^n \omega_i = 1 \;,
\label{eq: DEF_LOP}
\end{equation}
and $\omega_i \geq 0$.

\begin{proposition}
The PDF $p_f$ created by the LOP method in (\ref{eq: DEF_LOP}) is weakly conservative w.r.t. $p_{t}$ from (\ref{eq: DEF_TRUE}). 
\label{prop: Linear_Cons}
\end{proposition}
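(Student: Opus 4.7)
The plan is to apply Proposition \ref{prop:boundedA}, which for PDFs with support $\mathbb{R}^m$ reduces weak conservativeness to verifying that the set $A = \{x : p_f(x) < p_t(x)\}$ is bounded. Since each $p_i$ has support $\mathbb{R}^m$ by assumption, so does its convex combination $p_f$, and the product $p_C \prod_i p_{i \setminus C}$ defining $p_t$ likewise has full support. Hence Condition \ref{cond:SupportSuperset} holds automatically and the hypothesis of Proposition \ref{prop:boundedA} is met; everything reduces to bounding $A$.

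To bound $A$, I would write each $p_i(x) = p_C(x)\, p_{i \setminus C}(x)/\eta_i$ with normalizing constant $\eta_i$, select any index with $\omega_i > 0$ (say $i=1$, after relabeling), and use the trivial inequality $p_f(x) \geq \omega_1 p_1(x)$. Taking the ratio against $p_t(x) = p_C(x) \prod_j p_{j \setminus C}(x)/\eta_t$ then yields
\begin{equation*}
\frac{p_t(x)}{p_f(x)} \;\leq\; \frac{\eta_1}{\omega_1 \eta_t} \prod_{j=2}^n p_{j \setminus C}(x).
\end{equation*}
Provided $n \geq 2$ (the case $n = 1$ is trivial, since then $p_f = p_1 \propto p_t$) and each factor $p_{j \setminus C}$ vanishes at infinity, the right-hand side tends to zero as $\|x\| \to \infty$. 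Consequently there exists $R$ such that $p_t(x) \leq p_f(x)$ whenever $\|x\| \geq R$, so $A$ is contained in the ball of radius $R$ and Proposition \ref{prop:boundedA} delivers weak conservativeness of $p_f$ with respect to $p_t$.

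The main obstacle is justifying the tail-decay claim $\prod_{j=2}^n p_{j \setminus C}(x) \to 0$. This does not follow from the support hypothesis alone, but it is already implicit in the framework of Proposition \ref{prop:boundedA}, whose own proof appeals to $p_c(x), p_t(x) \to 0$ at infinity. Under that standard setting, integrability of each $p_i = p_C p_{i \setminus C}/\eta_i$ forces $p_{i \setminus C}$ to vanish at infinity on any region where $p_C$ is bounded below, which is enough to shrink the integrand in the bound above to zero and close the argument. The rest of the work is purely algebraic: the key conceptual insight is that a mixture $p_f$ has at worst single-factor tails, while $p_t$ has product tails of $n$ factors, so the latter is eventually dominated by the former.
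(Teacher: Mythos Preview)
Your proposal is correct and follows essentially the same route as the paper: both arguments verify the hypothesis of Proposition~\ref{prop:boundedA} by showing that the ratio $p_f/p_t$ diverges as $\|x\|\to\infty$, so that $A=\{x:p_f(x)<p_t(x)\}$ is bounded. The only difference is cosmetic: the paper computes the full ratio
\[
h(x)=\frac{p_f(x)}{p_t(x)}=\eta_t\sum_{i=1}^n \frac{\omega_i}{\prod_{j\neq i} p_{j\setminus C}(x)}
\]
and observes that each summand diverges, whereas you lower-bound $p_f$ by a single mixture component $\omega_1 p_1$ to obtain the cleaner one-term estimate $p_t/p_f\le (\eta_1/\omega_1\eta_t)\prod_{j\ge 2}p_{j\setminus C}(x)\to 0$. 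Your version is marginally more elementary; the paper's makes the full structure of the ratio visible. Both rely on the same unstated regularity assumption $p_{j\setminus C}(x)\to 0$ at infinity, which you rightly flag.
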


\begin{proof}
We first show that there exists an $x$ such that $p_f(x) \geq p_t(x).$ To show this, note
\begin{align*}
h(x) \overset{\triangle}{=} & \frac{p_f(x)}{p_t(x)} = \eta_t\sum_{i = 1}^n \frac{\omega_i}{\prod_{j \neq i} p_{j\setminus C}(x)} 
\end{align*}
diverges as $||x||^2 \rightarrow \infty$ because $\lim_{||x||^2 \rightarrow \infty} p_{i\setminus C}(x) = 0$ for $1 \leq i \leq n$. Since $h$ diverges, there exists a finite $a$ such that $h(x) > 1$ for $||x||^2 > a$, i.e., $p_f(x) > p_t(x)$. Because $a$ is finite, the set $A=\{x : p_f(x) < p_t(x)\}$ is bounded and $p_f$ is weakly conservative w.r.t. $p_c$ by Proposition~\ref{prop:boundedA}.
\end{proof}

\subsubsection{The Log Linear Opinion Pool} The second fusion rule we are interested in is the Chernoff fusion method, sometimes known as the log-linear opinion pool (LLOP) \cite{Julier}. Given PDFs $(p_i)_{i = 1}^n$, we fuse them together to produce a PDF $p_f$
\begin{equation}
p_f(x) = \frac{1}{\eta_f}\prod_{i = 1}^n p_{i}^{\omega_i}(x)\;, 
\label{eq: DEF_LLOP}
\end{equation}
where $$\sum_{i=1}^n\omega_i =1,\; 0\leq \omega_i \leq 1$$ and $$\eta_f = \int_{\mathbb{R}^m}\prod_{i = 1}^n p_i^{\omega_i}(x) \ dx\; .$$

\begin{proposition}
The PDF $p_f$ created by the LLOP method in (\ref{eq: DEF_LLOP}) is a weakly conservative approximation of $p_{t}$ in (\ref{eq: DEF_TRUE}).
\label{prop: Log_Linear_Cons}
\end{proposition}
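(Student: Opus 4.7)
The plan is to mirror the proof of Proposition~\ref{prop: Linear_Cons} but with the log-linear ratio. Specifically, I will compute $h(x) \overset{\triangle}{=} p_f(x)/p_t(x)$ explicitly, show it diverges as $\|x\|^2 \to \infty$, and then invoke Proposition~\ref{prop:boundedA} (which applies because $\text{supp}(p_i)=\mathbb{R}^m$ for each $i$ forces $\text{supp}(p_f)=\text{supp}(p_t)=\mathbb{R}^m$).

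The first step is to rewrite each input PDF using its common/unique factorization. Writing $p_i(x) = \eta_i^{-1}\, p_C(x)\, p_{i\setminus C}(x)$ for the normalizing constants $\eta_i$, I would substitute into \eqref{eq: DEF_LLOP} and collect terms. Because $\sum_{i=1}^n \omega_i = 1$, the common factor satisfies $\prod_i p_C^{\omega_i}(x) = p_C(x)$, so
\begin{equation*}
p_f(x) \;=\; \frac{1}{\eta_f}\!\left(\prod_{i=1}^n \eta_i^{-\omega_i}\right) p_C(x) \prod_{i=1}^n p_{i\setminus C}^{\omega_i}(x).
\end{equation*}
Dividing by $p_t(x)$ as defined in \eqref{eq: DEF_TRUE}, the $p_C(x)$ factors cancel and the ratio becomes
\begin{equation*}
h(x) \;=\; \frac{p_f(x)}{p_t(x)} \;=\; K \prod_{i=1}^n p_{i\setminus C}^{\,\omega_i-1}(x) \;=\; K \prod_{i=1}^n \frac{1}{p_{i\setminus C}^{\,1-\omega_i}(x)},
\end{equation*}
where $K = \eta_t\,\eta_f^{-1}\prod_i \eta_i^{-\omega_i} > 0$.

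Next, I would argue that $h(x) \to \infty$ as $\|x\|^2 \to \infty$. Each exponent $1-\omega_i$ lies in $[0,1]$, and since $\sum_i \omega_i = 1$ with $n\ge 2$ (the $n=1$ case gives $p_f=p_t$ trivially), not all exponents are zero: at least one $i$ has $\omega_i < 1$, hence $1-\omega_i > 0$. Since $p_{i\setminus C}(x)\to 0$ as $\|x\|^2\to\infty$ (the same tail assumption used for the LOP proof), the factor $p_{i\setminus C}^{-(1-\omega_i)}(x)$ diverges, while every other factor is bounded below by a positive quantity or also diverges. Therefore $h(x)\to\infty$.

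The divergence of $h$ yields a finite radius $a$ beyond which $p_f(x) > p_t(x)$, so the set $A = \{x : p_f(x) < p_t(x)\}$ is contained in the ball of radius $a$ and hence bounded. Since $p_f$ and $p_t$ both have support $\mathbb{R}^m$, Proposition~\ref{prop:boundedA} applies and yields that $p_f$ is weakly conservative w.r.t.\ $p_t$. The only delicate point is ensuring at least one exponent $1-\omega_i$ is strictly positive; this is the main (minor) obstacle and is handled by the observation that $\sum_i \omega_i = 1$ forbids all weights from equaling one simultaneously when $n\ge 2$.
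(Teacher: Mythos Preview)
Your proof is correct and follows essentially the same route as the paper: compute the ratio $h(x)=p_f(x)/p_t(x)$, obtain (up to a positive constant) the product $\prod_i p_{i\setminus C}^{\omega_i-1}(x)$, observe it diverges as $\|x\|^2\to\infty$, and finish via Proposition~\ref{prop:boundedA}. You are simply a bit more explicit than the paper about the normalizing constants $\eta_i$ and about why at least one exponent $1-\omega_i$ is strictly positive.
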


\begin{proof}
We first show that there exists an $x$ such that $p_f(x) \geq p_t(x).$ To show this, we define the ratio between $p_f$ and $p_t$
\begin{align*}
h(x) \overset{\triangle}{=} & \frac{p_f(x)}{p_t(x)} = \frac{\eta_t}{\eta_f}\prod_{i = 1}^n p_{i \setminus C}^{\omega_i-1}(x)
\end{align*}
and note that it diverges as $||x||^2 \rightarrow \infty$ since $\omega_i \leq 1$ for all $i$. The rest of the proof is identical to the proof of Proposition \ref{prop: Linear_Cons}.
\end{proof}

\begin{figure*}
	\begin{subfigure}{0.31\linewidth}
		\centering
		\includegraphics[width=1\linewidth]{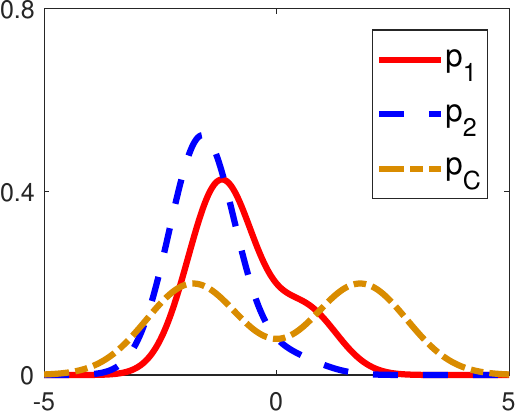}
	\end{subfigure}
	\hspace{.01\linewidth}
\begin{subfigure}{.31\linewidth}
	\centering
	\includegraphics[width=1\linewidth]{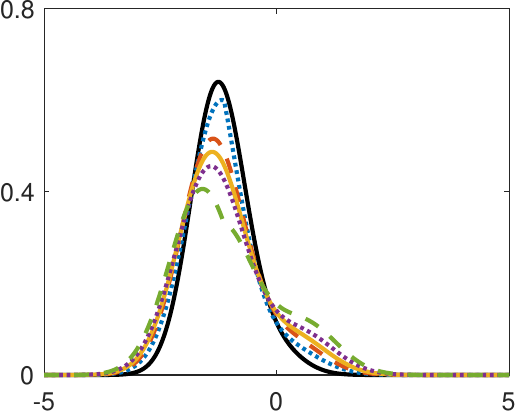}
\end{subfigure}
	\hspace{.01\linewidth}
\begin{subfigure}{.31\linewidth}
	\centering
	\includegraphics[width=1\linewidth]{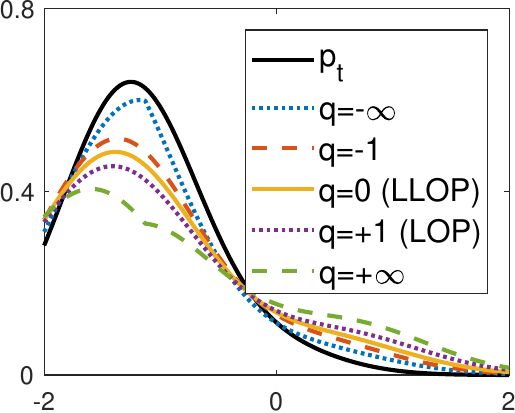}
\end{subfigure}
	\caption{A simple example of homogeneous fusion, with $p_C = 0.5\mathrm{N}(-1.8,1) + 0.5\mathrm{N}(1.8,1)$, $p_{1\setminus C} = \mathrm{N}(-0.6,1)$ and $p_{2\setminus C} = \mathrm{N}(-1.4, 1)$.  The left figure plots $p_C$ and the two input functions $p_1$ and $p_2$.  The middle figure plots the ``true'' fusion result $p_t = \frac{1}{\eta_t}p_C\, p_{1\setminus C}\, p_{2\setminus C}$, together with a homogeneous fusion with varying values of $q$.  The third figure is a zoomed in version of the middle sub-figure.  Where applicable, the weight on each input was set to 0.5}
	\label{fig:ExampleHomogeneous}
\end{figure*}

\subsubsection{Homogeneous Functionals} 
The third data fusion rule we analyze fuses PDFs using homogeneous functionals from \cite{Taylor2019}. The ``generalized power mean'' can be used to create several different homogeneous functions of degree 1.  Given PDFs $\{p_i\}_{i = 1}^m$, the fused PDF $p_f$ is defined as
\begin{equation}
p_f(x) = \frac{1}{\eta_f}\left(\sum_{i = 1}^n \omega_i p^q_{i}(x)\right)^{1/q} 
\label{eq: Fused_Homogeneous}
\end{equation}
for $-\infty \leq q \leq \infty$. Various data fusion methods are special cases of the ``generalized power mean rule'' method. For example,
if $q = 0$, we recover the LLOP method and if $q = 1$ we recover the LOP method. In addition, if $q = -\infty$, $p_f(x) \propto \min_{1 \leq i \leq n}(p_i(x))$ and if $q = \infty$, $p_f \propto \max_{1 \leq i \leq n}(p_i(x))$. In Figure \ref{fig:ExampleHomogeneous}, we plot $p_f$ for various values of $q$, demonstrating the different results that can be obtained using homogeneous functionals.

\begin{proposition}
The PDF $p_f$ created by the fusion method in (\ref{eq: Fused_Homogeneous}) is a weakly-conservative approximation of $p_{t}$ from (\ref{eq: DEF_TRUE}). 
\end{proposition}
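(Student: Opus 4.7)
The plan is to mimic the structure used in the proofs of Propositions \ref{prop: Linear_Cons} and \ref{prop: Log_Linear_Cons}: form the ratio $h(x) = p_f(x)/p_t(x)$, show it diverges as $\|x\|^2 \to \infty$, deduce that $A = \{x : p_f(x) < p_t(x)\}$ is bounded, and then invoke Proposition \ref{prop:boundedA} to conclude weak conservativeness. Since the support assumption $\operatorname{supp}(p) = \mathbb{R}^m$ is already in force, the conditions of Proposition \ref{prop:boundedA} will apply as soon as $A$ is bounded.

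The main step is the lower bound on the generalized power mean. For any $q \in [-\infty, \infty]$, a weighted power mean of positive values lies between the minimum and the maximum of those values, i.e.
\begin{equation*}
\left(\sum_{i=1}^n \omega_i p_i^q(x)\right)^{1/q} \;\geq\; \min_{1 \leq i \leq n} p_i(x),
\end{equation*}
with the obvious interpretation at $q \in \{0, \pm\infty\}$ (the geometric mean also dominates the minimum, and the $q = -\infty$ case is an equality). I would justify this by splitting into $q > 0$, $q < 0$, and the limiting cases, each of which reduces to the standard monotonicity of $t \mapsto t^q$ together with the weights summing to one. Hence
\begin{equation*}
p_f(x) \;\geq\; \frac{1}{\eta_f}\, \min_{1 \leq i \leq n} p_i(x),
\end{equation*}
which gives $h(x) \geq \frac{1}{\eta_f}\, \min_i \bigl(p_i(x)/p_t(x)\bigr)$ since $p_t(x) > 0$.

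Next, exactly as in the proof of Proposition \ref{prop: Linear_Cons}, I would write each ratio
\begin{equation*}
\frac{p_i(x)}{p_t(x)} \;=\; \frac{\eta_t}{\eta_i}\cdot \frac{1}{\prod_{j \neq i} p_{j \setminus C}(x)},
\end{equation*}
where $\eta_i$ normalizes $p_i \propto p_C\, p_{i\setminus C}$. Because $p_{j \setminus C}(x) \to 0$ as $\|x\|^2 \to \infty$ for every $j$, each ratio, and therefore their pointwise minimum, diverges. Combining this with the lower bound on $h(x)$ yields $h(x) \to \infty$ as $\|x\|^2 \to \infty$, so there exists a finite $a$ with $p_f(x) > p_t(x)$ whenever $\|x\|^2 > a$; in particular $A \subseteq \{x : \|x\|^2 \leq a\}$ is bounded. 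Proposition \ref{prop:boundedA} then finishes the proof.

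The only part requiring care is the uniform lower bound by the minimum across the whole range $q \in [-\infty, \infty]$, since the formula in \eqref{eq: Fused_Homogeneous} is only literally well-defined for $q \notin \{0, \pm\infty\}$ and the special cases are defined by the limits mentioned in the text. Treating those cases separately using $q = 0 \Rightarrow$ LLOP (already handled by Proposition \ref{prop: Log_Linear_Cons}), $q = -\infty \Rightarrow p_f \propto \min_i p_i$, and $q = +\infty \Rightarrow p_f \propto \max_i p_i \geq \min_i p_i$, will close this minor gap, leaving a proof that is essentially a short corollary of the earlier two.
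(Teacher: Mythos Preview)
Your proof is correct, but it proceeds by a different route than the paper's. The paper exploits the defining property of a degree-one homogeneous functional to factor out the common information first, writing
\[
p_f(x) \;=\; \frac{1}{\eta_f}\,p_C(x)\left(\sum_{i=1}^n \omega_i\, p_{i\setminus C}^{\,q}(x)\right)^{1/q},
\]
and then algebraically simplifies the ratio $p_f/p_t$ to
\[
h(x) \;=\; \frac{\eta_t}{\eta_f}\left(\sum_{i=1}^n \frac{\omega_i}{\prod_{j\neq i} p_{j\setminus C}^{\,q}(x)}\right)^{1/q},
\]
after which divergence as $\|x\|^2\to\infty$ is read off directly (with the implicit case split on the sign of $q$). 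Your argument instead invokes the classical inequality that any weighted power mean dominates the minimum of its inputs, giving $p_f(x)\ge \eta_f^{-1}\min_i p_i(x)$, and then reduces to the per-input ratios $p_i/p_t$ already analyzed in Proposition~\ref{prop: Linear_Cons}. The paper's version is closer to the spirit of the homogeneous-functional framework and yields an exact expression for $h$; your version is arguably more elementary, handles all $q$ via a single well-known inequality, and makes transparent why the result is really a corollary of the LOP case. Both lead to the same application of Proposition~\ref{prop:boundedA}, and your treatment of the limiting cases $q\in\{0,\pm\infty\}$ is appropriate.
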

\begin{proof}
	By the definition of homogeneous functionals of degree 1, \eqref{eq: Fused_Homogeneous} can be re-written as:
	$$p_f(x) = \frac{1}{\eta_f}p_C(x)\left(\sum_{i = 1}^n \omega_i p^q_{i \setminus C}(x)\right)^{1/q}$$
We define 
\begin{align*}
h(x) &\overset{\triangle}{=} \frac{p_f(x)}{p_t(x)} = \frac{\eta_t}{\eta_f}\left(\sum_{i = 1}^n \frac{\omega_i}{\prod_{j \neq i} p^q_{j \setminus C}(x)}\right)^{1/q} \;.
\end{align*}
For any $q \in [-\infty, \infty]$, $h(x) \rightarrow \infty$ as $||x||^2 \rightarrow \infty$. The rest of the proof is identical to the proof of Proposition \ref{prop: Linear_Cons}.
\end{proof}

\section{Conclusion}
\label{sec: Conclusion}
When working with intractable PDFs it is often desirable to have a supplementary PDF that has some properties w.r.t. the true PDF.  While the idea of ``conservativeness'' has been mentioned previously as a desirable characteristic, there is little consensus on a general definition of conservativeness. This paper introduces an intuitive and formal definition for conservative that can be applied to any two PDFs.  This definition, \emph{strictly conservative}, captures the intuition behind conservativeness being a desirable property and conforms fairly well with prior definitions, while addressing their shortcomings.  Unfortunately, we show that no current fusion algorithm achieves strictly conservative outputs.  Therefore, we also propose a weaker definition. Using this weaker definition, we prove that several previously introduced fusion rules are ``weakly'' conservative for PDFs with some significant restrictions (infinite support and integrates to 1 over $\mathbb{R}^m$). 

While these properties are useful, there is considerable future work that we would like to see performed in this area.  First, the definitions of conservative presented in this paper are all binary. The $\alpha'$ parameter used to prove weakly conservative can be arbitrarily close to 1.  This $\alpha'$ parameter, however, is also a measure of how similar two PDFs are.  Designing fusion rules that guarantee maximum $\alpha'$ values could be very meaningful.  Second, all of the proofs in Section \ref{s:Applications} are for PDFs with infinite support and normalizable across that support.  Extending these proofs to (1) allow the independent information ($p_{i\bs C}$) to be likelihoods rather than proper PDFs and (2) including all PDFs with non-infinite support could significantly expand the applicability of these results.  Third, it would be interesting to consider other desirable attributes of conservative distributions and their definitions such as unbiasedness or invariance to coordinate transformation and see if they could be useful in defining conservativeness.

\bibliographystyle{plain}
\bibliography{ConsistencyThoughts}

\appendix
\section{Proof of Proposition \ref{prop: ThreeCond_StrictCons}}
\begin{proof}
First, we prove that being strictly conservative implies each of the three conditions. We start with Condition \textbf{\ref{cond:SupportSuperset}.}  Assume for contradiction that $\text{supp}(p_c) \nsupseteq \text{supp}(p_t)$.  Then $M_c(1) \nsupseteq M_t(1)$, violating Definition~\ref{def: Original_Cons}. We now analyze Condition \textbf{\ref{cond:Broader_Cons2}.} When $M_c(\alpha) \supseteq M_t(\alpha)$, we can re-write $M_c(\alpha) = M_t(\alpha)\cup B$ where $B=M_c(\alpha)\cap M_t(\alpha)^c$.  If $P_c(M_c(\alpha)) = P_t(M_t(\alpha)) = \alpha$, $P_c(M_t(\alpha))+P_c(B) = P_c(M_c(\alpha))$ or $P_c(M_t(\alpha)) = P_t(M_t(\alpha)) - P_c(B)$.  Because $P_c(B) \geq 0$, Condition~\ref{cond:Broader_Cons2} is proven.  For Condition~\ref{cond:Broader_Cons3}, the proof is similar to the proof for Condition~\ref{cond:Broader_Cons3} and is omitted for brevity.

Second, we prove by counter-example that these three conditions are not sufficient for $P_c$ to be strictly conservative approximation of $P_t$. Let $p_t = \mathcal{N}(0,4)$ and 
$$
p_c(x) = \begin{cases}
.05 & 1 \le x \le 2\\
\frac{\Phi(\frac{15}{2}) - \Phi(\frac{10}{2})+\Phi(\frac{2}{2}) - \Phi(\frac{1}{2}) - .05}{5} & 10 \le x \le 15\\
\mathcal{N}(x; 0,4) &\text{otherwise}.
\end{cases}
$$
\begin{figure}
	\centering
	\includegraphics[width=.7\linewidth]{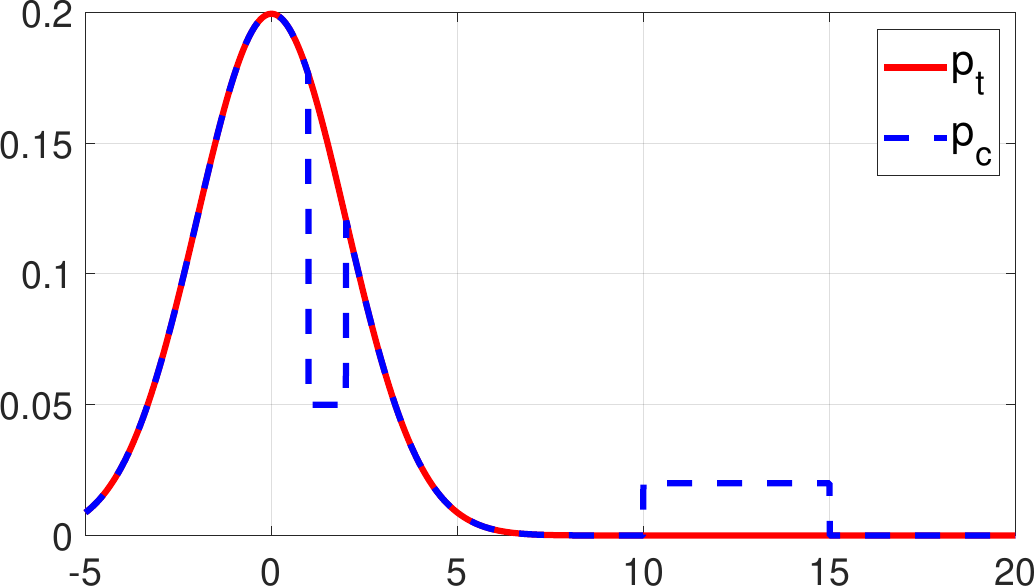}
	\caption{An example where $p_c$ obeys the three conditions across all $\alpha$ but is not strictly conservative.}
	\label{fig:CounterExampleStrictly}
\end{figure}
These two distributions are illustrated in Figure~\ref{fig:CounterExampleStrictly}.  While $p_c$ meets all three conditions for all $\alpha$, $p_c$ will not be strictly conservative w.r.t. $p_t$ due to the ``notch'' removed from 1 to 2, providing the needed counter-example.
\end{proof}

\section{Proof for some entries in Table \ref{tab:gaussCompare}}
\label{a:tableProofs}
The following propositions and their proofs help define why different row and column combinations in Table~\ref{tab:gaussCompare} have a checkmark or X.  All distributions ($p_c$ and $p_t$) are assumed to be Gaussian distributions.
\begin{proposition}
If $\mu_c = \mu_t$, then $p_c$ is p.s.d. conservative w.r.t. $p_t$ if and only if $p_c$ is strictly conservative w.r.t. $p_t$. 
\end{proposition}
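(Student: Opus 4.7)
The plan is to exploit the fact that MV sets of a Gaussian $\mathrm{N}(\mu,\Sigma)$ are ellipsoids of the form $E(\Sigma, r_\alpha^2) := \{x : (x-\mu)^T \Sigma^{-1}(x-\mu) \le r_\alpha^2\}$, where $r_\alpha^2$ is the $\alpha$-quantile of the $\chi^2_m$ distribution. Crucially, $r_\alpha^2$ depends only on the dimension $m$ and $\alpha$ and \emph{not} on $\Sigma$. Combined with the assumption $\mu_c = \mu_t = \mu$, this means that $M_t(\alpha)$ and $M_c(\alpha)$ are ellipsoids centered at the same point with the same quantile level.

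First I would reduce the strictly conservative condition to an algebraic inequality. After translating so that $\mu = 0$, the condition $M_c(\alpha) \supseteq M_t(\alpha)$ for a fixed $\alpha \in (0,1)$ becomes the implication $y^T \Sigma_t^{-1} y \le r_\alpha^2 \Rightarrow y^T \Sigma_c^{-1} y \le r_\alpha^2$ for all $y \in \mathbb{R}^m$. Because $r_\alpha^2$ ranges over all of $(0,\infty)$ as $\alpha$ varies in $(0,1)$, requiring this implication for every $\alpha$ is equivalent to
\begin{equation*}
y^T \Sigma_c^{-1} y \;\le\; y^T \Sigma_t^{-1} y \quad \text{for all } y \in \mathbb{R}^m,
\end{equation*}
i.e., $\Sigma_c^{-1} \preceq \Sigma_t^{-1}$ in the p.s.d.\ order.

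Second I would invoke the standard fact that for symmetric positive definite matrices $A, B$, one has $A \preceq B$ if and only if $B^{-1} \preceq A^{-1}$ (this follows from simultaneous diagonalization of the pair $A, B$, or from the operator-monotonicity of $t \mapsto -1/t$ on $(0,\infty)$). Applying this with $A = \Sigma_c^{-1}$ and $B = \Sigma_t^{-1}$ yields $\Sigma_c \succeq \Sigma_t$, which is the p.s.d.\ definition. Conversely, reversing each step shows that $\Sigma_c \succeq \Sigma_t$ gives nested ellipsoids for every $\alpha$, hence strict conservativeness.

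The main obstacle is not any single calculation but rather making sure the quantifier over $\alpha$ really does translate into a global p.s.d.\ inequality; the key insight is that $r_\alpha^2$ sweeps out all positive reals, so the family of ellipsoid inclusions for all $\alpha$ collapses into a single matrix inequality. A secondary subtlety is degeneracy: the argument as written assumes $\Sigma_t, \Sigma_c$ are invertible. If the paper allows singular covariances, one would need to handle the support condition (Condition~\ref{cond:SupportSuperset}) separately, observing that $\Sigma_c \succeq \Sigma_t$ forces $\text{range}(\Sigma_t) \subseteq \text{range}(\Sigma_c)$, so that $\text{supp}(p_c) \supseteq \text{supp}(p_t)$, and then restrict to the relevant subspace before applying the ellipsoid argument.
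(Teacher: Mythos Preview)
Your proposal is correct and follows essentially the same approach as the paper: both identify the MV sets of a centered Gaussian as the ellipsoids $\{x:x^T\Sigma^{-1}x\le F^{-1}(\alpha)\}$ with the $\chi^2_m$ quantile $F^{-1}(\alpha)$ independent of $\Sigma$, reduce the nesting $M_t(\alpha)\subseteq M_c(\alpha)$ for all $\alpha$ to the quadratic-form inequality $\Sigma_c^{-1}\preceq\Sigma_t^{-1}$, and then pass to $\Sigma_c\succeq\Sigma_t$ via inversion. The only difference is presentational: the paper argues the two directions separately (one by inclusion-chasing, one by contradiction) and uses the order-reversal under inversion implicitly, whereas you package both directions as a single chain of equivalences and state that lemma explicitly.
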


\begin{proof}
    Assume that $p_c$ is a p.s.d. conservative approximation of $p_t$. Without loss of generality, assume that $\mu_c = \mu_t = 0.$ The MV set for $p_c$ and $p_t$ are $M_c(\alpha) = \{x: x^T \Sigma_c^{-1}x \leq F^{-1}(\alpha)\}$ and $M_t(\alpha) = \{x: x^T\Sigma_t^{-1} x \leq F^{-1}(\alpha)\}$, respectively, where $F^{-1}(\alpha)$ is the inverse CDF of the $\chi^2$ distribution with $\text{dim}(p_c)$ degrees of freedom. Take $x \in M_t(\alpha)$, so that $x^T \Sigma_t^{-1} x \leq F^{-1}(\alpha)$. Then,
    \begin{equation*}
        x^T(\Sigma_t^{-1} - \Sigma_c^{-1})x \geq 0 \implies x^T\Sigma_t^{-1}x \geq x^T \Sigma_c^{-1}x \;.
    \end{equation*}
    We conclude that $x^T \Sigma_c^{-1}x \leq F^{-1}(\alpha)$, so $x \in M_c(\alpha).$ It follows that $M_t(\alpha) \subseteq M_c(\alpha)$ for all $\alpha.$ To prove the other direction, assume for contradiction that $p_c$ is strictly, but not p.s.d conservative, w.r.t $p_t$. Then, there exists $\tilde x$ such that $\tilde x \Sigma_t^{-1}\tilde x < \tilde x \Sigma_c^{-1} \tilde x.$ Let $\alpha = F\left(\tilde x^T\Sigma_t^{-1}\tilde x\right) \in (0,1).$ Then, $\tilde x \in M_t(\alpha)$, but $\tilde x \not \in M_c(\alpha)$ because $\tilde x \Sigma_c^{-1} \tilde x > \tilde x \Sigma_t^{-1} \tilde x = F^{-1}(\alpha).$ This is a contradiction. The result follows.
\end{proof}

\begin{proposition}
    If $\mu_c = \mu_t$ and $\Sigma_c = k\Sigma_t, k\ge 1$ then $p_c$ is GEOP w.r.t. $p_t$
\end{proposition}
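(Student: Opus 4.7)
The plan is to verify the two parts of the GEOP definition separately: the Greater Entropy (GE) condition $H(p_c) \ge H(p_t)$ and the Order Preserving (OP) condition $p_t(x_1) \ge p_t(x_2) \iff p_c(x_1) \ge p_c(x_2)$. Since both distributions are Gaussian with the same mean and with covariances related by a positive scalar, both pieces should follow from direct computation with the standard Gaussian formulas.

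First, for the entropy condition, I would recall that the differential entropy of an $m$-dimensional Gaussian $\mathrm{N}(\mu, \Sigma)$ is $\tfrac{1}{2}\log\bigl((2\pi e)^m |\Sigma|\bigr)$. Using the identity $|\Sigma_c| = |k\Sigma_t| = k^m |\Sigma_t|$, the entropy difference is
\begin{equation*}
H(p_c) - H(p_t) = \tfrac{1}{2}\log\bigl(k^m\bigr) = \tfrac{m}{2}\log k \;,
\end{equation*}
which is nonnegative precisely when $k \ge 1$. This dispatches the GE half of the condition immediately.

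Second, for the order-preserving condition, the key observation is that without loss of generality we may take $\mu_c = \mu_t = 0$ (or carry $\mu$ through symbolically) and write $\Sigma_c^{-1} = \tfrac{1}{k}\Sigma_t^{-1}$. Since the Gaussian PDF is a strictly decreasing function of its Mahalanobis distance, $p_c(x_1) \ge p_c(x_2)$ is equivalent to
\begin{equation*}
(x_1 - \mu)^T \Sigma_c^{-1} (x_1 - \mu) \le (x_2 - \mu)^T \Sigma_c^{-1} (x_2 - \mu) \;.
\end{equation*}
Substituting $\Sigma_c^{-1} = \tfrac{1}{k}\Sigma_t^{-1}$ and multiplying both sides by $k > 0$ gives the analogous inequality with $\Sigma_t^{-1}$, which in turn is equivalent to $p_t(x_1) \ge p_t(x_2)$. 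Thus the quadratic forms $x^T\Sigma_c^{-1}x$ and $x^T\Sigma_t^{-1}x$ induce the same total preorder on $\mathbb{R}^m$, and the OP condition holds on the nose (not merely "almost everywhere"). Combining the two steps yields the proposition.

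I do not expect any real obstacle here: the only mild subtlety is the boundary case $k = 1$, where the two PDFs coincide and every condition holds trivially, and the normalizing constants $(2\pi)^{-m/2}|\Sigma|^{-1/2}$ which must be tracked but cancel correctly because the Gaussian PDF depends monotonically on the Mahalanobis distance with the same monotone direction for both distributions. In short, the proof is essentially two short displayed equations: one for entropy, one for the quadratic form, each consisting of a direct substitution using $\Sigma_c = k\Sigma_t$.
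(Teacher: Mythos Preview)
Your proposal is correct and follows essentially the same approach as the paper: you verify the OP condition via the observation that $\Sigma_c^{-1} = \tfrac{1}{k}\Sigma_t^{-1}$ makes the two Mahalanobis quadratic forms scalar multiples, and you verify the GE condition via the Gaussian entropy formula $H(p) = \tfrac{1}{2}\log\det(2\pi e\,\Sigma)$ together with $|\Sigma_c| = k^m|\Sigma_t|$. The only cosmetic difference is the order in which the two conditions are checked.
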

\begin{proof}
    First, we prove $p_c$ is order preserving (OP) w.r.t. $p_t$.  Note the following string of inequalities, which hold for any $x_1, x_2$:
    \begin{align*}
       p_c(x_1) \geq p_c(x_2) & \iff \log p_c(x_1) \geq \log p_c(x_2) \\
       &\iff -\frac{1}{2}(x_1-\mu)^T \Sigma_c^{-1}(x_1-\mu) \geq -\frac{1}{2}(x_2-\mu)^T \Sigma_c^{-1}(x_2-\mu) \\
       &\iff -\frac{1}{2k}(x_1 - \mu)^T \Sigma_t^{-1} (x_1-\mu) \geq -\frac{1}{2k}(x_2-\mu)^T \Sigma_t^{-1}(x_2-\mu) \\
       &\iff \log(p_t(x_1)) \geq \log(p_t(x_2)) \\
       &\iff p_t(x_1) \geq p_t(x_2) \;.
    \end{align*}
    
    Second, recall that for $p = \mathrm{N}(\mu, \Sigma)$, the entropy of $p$ is 
    $$H(p) = \frac{1}{2}\log(\text{det}(2\pi e \Sigma)).$$ 
    Therefore, if $k\geq 1$ then $H(p_c) \geq H(p_t)$.
\end{proof}
\end{document}